\documentclass[11pt]{article} 
\usepackage{setspace, ifdraft, blindtext}
\ifdraft{%
    \usepackage{todonotes}}{ 
    \usepackage[disable]{todonotes} 
}
\usepackage{kantlipsum}
\usepackage{tikz}
\usepackage{pgf,tikz}
\usetikzlibrary{arrows}
\usetikzlibrary[patterns]
\usepackage[T1]{fontenc}
\usepackage{enumitem}
\usepackage[english]{babel}
\usepackage{graphicx}

\usepackage{amsmath, amssymb, amsthm}

\theoremstyle{plain}
\newtheorem{theorem}{Theorem}[section]

\newtheorem*{prop*}{Proposition}
\newtheorem{lemma}[theorem]{Lemma}
\newtheorem*{lem*}{Lemma}
\newtheorem{corollary}[theorem]{Corollary}
\newtheorem{claim}[theorem]{Claim}

\theoremstyle{remark}
\newtheorem*{remark}{Remark}
\theoremstyle{definition}
\newtheorem{definition}[theorem]{Definition}
\makeatletter
\def\th@definition{%
  \thm@notefont{}
  \normalfont 
}
\makeatother

\usepackage[margin=1in]{geometry}
\oddsidemargin  0pt
\evensidemargin 0pt
\marginparwidth 40pt
\marginparsep 10pt
\topmargin 0pt
\headsep 10pt
\textheight 8.3in
\textwidth 6.4in

\newcommand{\beq}{\begin{equation}}
\newcommand{\eeq}{\end{equation}}

\bibliographystyle{plain}

\usepackage{algorithm}
\usepackage[noend]{algpseudocode}
\usepackage{url}

\newcommand\cJ{{\mathcal J}}

\newcommand{\sm}{\setminus}

\title{Finding Cliques in Social Networks: A New Distribution-Free Model}

\author{
Jacob Fox\thanks{Department of Mathematics, Stanford University, Stanford, CA 94305. Email: {\tt jacobfox@stanford.edu}. Supported by a Packard Fellowship, by NSF Career Award DMS-1352121 and by an Alfred P. Sloan Fellowship.}
\and 
Tim Roughgarden\thanks{Department of Computer Science, Stanford University, Stanford, CA 94305. Email:  {\tt tim@cs.stanford.edu}.  Supported in part by NSF award CCF-1524062.}
\and
C. Seshadhri\thanks{Department of Computer Science, University of California, Santa Cruz, CA 95064. Email: {\tt sesh@ucsc.edu}}
\and
Fan Wei\thanks{Department of Mathematics, Stanford University, Stanford, CA 94305. Email: {\tt fanwei@stanford.edu}.}
\and 
Nicole Wein\thanks{Department of Computer Science, Massachusetts Institute of Technology, Cambridge, MA 02139. Email: {\tt nwein@mit.edu}. Supported in part by a National Science Foundation fellowship. Most of this work was done while the author was at Stanford University.}
}


\begin{document}
\date{\today}
\maketitle
\thispagestyle{empty}

\begin{abstract}
We propose a new distribution-free model of social networks.
Our definitions are motivated by one of the most universal signatures of social networks, triadic closure---the property that pairs of vertices with common neighbors tend to be adjacent. Our most basic definition is that of a \emph{$c$-closed} graph, where for every pair of vertices $u,v$ with at least $c$ common neighbors, $u$ and $v$ are adjacent. We study the classic problem of enumerating all maximal cliques, an important task in social network analysis. We prove that this problem is fixed-parameter tractable with respect to $c$ on $c$-closed graphs. 
Our results carry over to {\em weakly $c$-closed graphs}, which only require a vertex deletion ordering that avoids pairs of non-adjacent vertices with $c$ common neighbors.  Numerical experiments show that well-studied social networks with thousands of vertices tend to be weakly $c$-closed for modest values of $c$.
 \end{abstract}
 \vfill
\pagebreak
\setcounter{page}{1}
\section{Introduction}

There has been an enormous amount of important work over the past 15
years on models for capturing the special structure of social networks.
This literature is almost entirely driven by the quest for
generative (i.e., probabilistic) models.
Well-known examples of such models include preferential
attachment~\cite{BaAl99}, the copying model~\cite{KuRa+00}, Kronecker
graphs~\cite{ChZhFa04,LeChKlFa10}, and the Chung-Lu random graph model~\cite{ChLu02,ChLu02-2}. There is little consensus about which generative model is the
``right'' one.  For example, already in 2006, the survey by Chakrabarti and
Faloutsos~\cite{ChFa06} compares~23 different probabilistic models
of social networks, and multiple new such models are proposed every year.

Generative models articulate a hypothesis about what ``real-world''
social networks look like, how they are created, and how they will evolve in the future. They are directly useful for generating
synthetic data and can also be used as a proxy
to study the effect of random processes on a network ~\cite{AlJeBa00,LiAm+08,MoSa10}. 
However, the plethora of models presents a quandary for the design of algorithms for social networks with rigorous guarantees: which of these models should one tailor an algorithm to?  One idea is to seek algorithms that are tailored to {\em none} of them, and to instead assume only determinstic combinatorial conditions that share the spirit of the prevailing generative models.  This is the approach taken in this paper.

There is empirical evidence that 
many NP-hard optimization problems 
are often easier to solve in social networks
than in worst-case graphs.  
For example, lightweight
heuristics are unreasonably effective in practice for finding the
maximum clique of a social network~\cite{RoGlGe15}. Similar success stories have been repeatedly reported for the problem
of recovering dense subgraphs or communities in social networks~\cite{Tsourakakis13,www15,MiPaPe+15,Tsourakakis15}. To define our 
notion of ``social-network-like'' graphs, we turn to one of the most agreed upon properties of social networks---{\em triadic closure},
the property that when two members of a social network have a friend in common, they are likely to be friends themselves.

\subsection{Properties of social networks}
There is wide consensus that social networks have relatively predictable
structure and features, and accordingly are not well modeled by
arbitrary graphs.
From a structural viewpoint, the most well studied and empirically
validated statistical properties of social networks
include heavy-tailed degree 
distributions~\cite{BaAl99,BrKu+00,FFF99},
a high density of triangles~\cite{WaSt98,SaCaWiZa10,UgKa+11}
and other dense subgraphs or
``communities''~\cite{For10,GiNe02,Ne03a,Ne06,LeLaDaMa08}, low diameter
and the small world property~\cite{Kl00,Kl00-2,Kl02,Ne01}, and triadic closure ~\cite{SaCaWiZa10,UgKa+11,SePiKo13}.

For the problem of finding cliques in networks, it does not help to assume that the graph has small diameter (every network can be rendered
small-diameter by adding one extra vertex connected to all other
vertices).  
Similarly, merely assuming a power-law degree
distribution does not seem to make the clique problem easier~\cite{FePaPa06}. 
On the other hand, as we show, the clique problem is tractable on graphs with strong triadic closure properties.

\subsection{Our model: $c$-closed graphs}\label{ss:closed}

Motivated by the empirical evidence for triadic closure in social networks, we define the class of $c$-closed graphs. Figure~\ref{f:enron} shows the triadic closure of the network of email communications at Enron~\cite{enron} and other social networks have been shown to behave similarly~\cite{BK13}. In particular, the more common neighbors two vertices have, the more likely they are to be adjacent to each other. The definition of $c$-closed graphs is a coarse version of this property: we assert that \emph{every} pair of vertices with $c$ or more common neighbors must be adjacent to each other.

\begin{figure}[h]
\centering
\includegraphics[width=.5\textwidth]{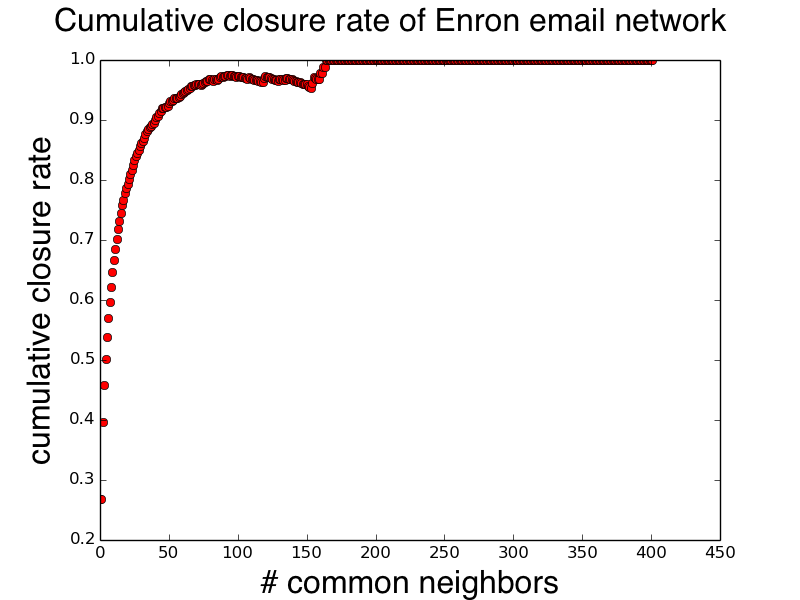}

\caption{Triadic closure properties of the Enron email graph (36K nodes and 183K edges). Nodes of
  this  network are Enron employees, and there is an edge
  connecting two employees if one sent at least
  one email to the other. Given an $x$ value, the
  $y$-axis shows the cumulative closure rate: the fraction of pairs of nodes with \emph{at least} $x$ common neighbors that are themselves
  connected by an edge.}  
  \label{f:enron}
\end{figure}

\begin{definition}[$c$-closed graph]\label{d:closed}
For a positive integer~$c$, an undirected graph $G=(V,E)$ is {\em $c$-closed} if, whenever two distinct vertices $u,v \in V$ have at least $c$ common
neighbors, $(u,v)$ is an edge of $G$.  
\end{definition}

The parameter~$c$ interpolates between a disjoint unions of cliques (when $c=1$) and all graphs (when $c=|V|-1$). The class of 2-closed graphs is already non-trivial.  These are exactly the graphs that do not contain $K_{2,2}$ or a diamond ($K_4$ minus an edge) as an induced subgraph.  For example, graphs with girth at least~5, e.g.~constant-degree expanders, are 2-closed. For every $c$, membership in the class of $c$-closed graphs can be
checked by squaring the adjacency matrix in $O(n^{\omega})$ time, where $\omega < 2.373$ is the matrix multiplication exponent.

While the definition of $c$-closed captures important aspects of triadic closure, it is fragile in the sense that a single pair of non-adjacent vertices with many common neighbors prevents the graph from being $c$-closed for a low value of $c$. To address this, we define the more robust notion of \emph{weakly $c$-closed} graphs and show that our results carry over to these graphs.  Well-studied social networks with thousands of vertices are typically weakly $c$-closed for 
modest values of $c$ (see Table~\ref{tab}). 

\begin{definition}
Given a graph and a value of $c$, a \emph{bad pair} is a non-adjacent pair of vertices with at least $c$ common neighbors.
\end{definition}

\begin{definition}[Weakly $c$-closed graph]
A graph is \emph{weakly $c$-closed} if there exists an ordering of the vertices $\{v_1,v_2,\dots,v_n\}$ such that for all $i$, $v_i$ is in no bad pairs in the graph induced by $\{v_i,v_{i+1},\dots,v_n\}$.
\end{definition}

A graph can be $c$-closed only for large $c$ but weakly $c$-closed for much smaller $c$. Consider the graph $G$ that is a clique of size $k$ with one edge $(u,v)$ missing. $G$ is not $c$-closed for any $c<k-2$. The only bad pair in $G$ is $(u,v)$. The vertex ordering that places $u$ and $v$ at the end demonstrates that $G$ is weakly 1-closed. Also, the properties of being $c$-closed and weakly $c$-closed are hereditary, meaning that they are closed under taking induced subgraphs. We will use this basic fact often. 

\subsection{Our contributions}

One can study a number of computational problems on $c$-closed (and weakly $c$-closed) graphs. We focus on the problem of enumerating all maximal cliques, an important problem in social network analysis~\cite{conte2016finding,tomita, du2006parallel,eppstein2011listing,tomita2007efficient}. 
We study \emph{fixed-parameter tractability}\footnote{A problem is said to be fixed-parameter tractable with respect to a parameter $k$ if there is an algorithm that solves it in time at most $f(k)n^\alpha$ where $f$ can be an arbitrary function but $\alpha$ is a constant.} with respect to $c$. There is a rich literature on fixed-parameter tractability for other graph parameters including treewidth, arboricity, and the size of the output~\cite{cygan2015parameterized}.

In a graph $G$, a \emph{clique} is a subgraph of $G$ in which there is an edge between every pair of vertices. A \emph{maximal} clique in $G$ is a clique that cannot be made any larger by the addition of some other vertex in $G$. In any graph, all maximal cliques can be listed in $O(mn)$ time per maximal clique~\cite{tsukiyama1977new}. We focus on the following two problems:
\begin{enumerate}
\item determining the maximum possible number of maximal cliques in a $c$-closed graph on $n$ vertices.
\item finding algorithms to enumerate all maximal cliques in $c$-closed graphs (that run faster than $O(mn)$ time per maximal clique).
\end{enumerate}

Our main result is that for constant $c$ the number of maximal cliques in a $c$-closed graph on $n$ vertices is $O(n^{2-2^{1-c}})$. More specifically, we prove the following bound.

\begin{theorem}\label{intro:bound}Any $c$-closed graph on $n$ vertices has at most $\min\{3^{(c-1)/3} n^2, 4^{(c+4)(c-1)/2}n^{2-2^{1-c}}\}$ maximal cliques.
\end{theorem}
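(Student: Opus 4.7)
The plan is to prove both bounds by induction on $c$, leveraging the structural fact that in a $c$-closed graph $G$, if $S \subseteq V(G)$ is a clique of size $t$, then the induced subgraph on $\bigcap_{v \in S} N(v)$ is $(c-t)$-closed (with the convention that $0$-closed means complete). This follows directly from unwrapping the definition: two non-adjacent vertices in the common neighborhood of $S$ already share all $t$ members of $S$ as common neighbors, so they must avoid $c-t$ further common neighbors inside the induced subgraph, lest they be forced adjacent in $G$. Instantiated at a vertex or an edge, this shows $G[N(v)]$ is $(c-1)$-closed and $G[N(u) \cap N(v)]$ is $(c-2)$-closed; in each case, maximal cliques of $G$ containing the ``root'' $S$ are in natural bijection with maximal cliques of the corresponding induced subgraph.

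The base case $c = 1$ is a disjoint union of cliques on $n$ vertices, with at most $n$ maximal cliques; the $\min$ in the statement reduces to $n$ here, so both bounds hold. For the bound $4^{(c+4)(c-1)/2}\,n^{2-2^{1-c}}$, the plan is to establish a recursion of the form $M(c,n) \leq A_c \cdot n \cdot \sqrt{M(c-1,n)}$ for a constant $A_c$ of size $4^{O(c^2)}$; this yields the exponent recurrence $\alpha_c = 1 + \alpha_{c-1}/2$ with $\alpha_1 = 1$, solving to $\alpha_c = 2 - 2^{1-c}$, and the multiplicative constant works out to $D_c = 4^{(c+4)(c-1)/2}$ by a clean geometric-type solve. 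The square-root recursion should come from combining two double counts: the vertex-incidence identity $\sum_K |K| = \sum_v (\text{max cliques containing } v) \leq n \cdot M(c-1,n)$ coming from the $(c-1)$-closed neighborhoods, and the edge-incidence identity $\sum_K \binom{|K|}{2} = \sum_e (\text{max cliques containing } e)$ estimated via the $(c-2)$-closed common neighborhoods of edges. A Cauchy--Schwarz step, or a partition of maximal cliques into ``small'' and ``large'' classes around a carefully chosen threshold, converts the linear bound on $\sum_K |K|$ into a square-root bound on $M$ itself.

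For the bound $3^{(c-1)/3}\,n^2$, I would use a canonical-edge assignment: for each maximal clique $K$ of size at least $2$, pick an edge $(u,v)\in K$ such that the common neighborhood behaves well under the Moon--Moser bound (any $k$-vertex graph has at most $3^{k/3}$ maximal cliques). Applied to the $(c-2)$-closed common neighborhood --- and combined with the fact that non-adjacent vertex pairs in a $c$-closed graph have at most $c-1$ common neighbors, which forces the ``relevant'' Moon--Moser count to be at most $3^{(c-1)/3}$ --- this yields at most $3^{(c-1)/3}$ max cliques per canonical edge, and summing over the at most $\binom{n}{2}$ possible edges gives the bound.

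The hardest part will be the Cauchy--Schwarz / two-regime step for the second bound: a direct application gives only $M^2 \leq M \cdot \sum_K |K|^2$, which is vacuous, so one must carefully split maximal cliques by size, bounding small cliques through the edge-incidence identity (which invokes the $(c-2)$-closed structure) and large cliques through the vertex-incidence identity (which invokes the $(c-1)$-closed structure), then balance the threshold to extract exactly the $\alpha_c = 1 + \alpha_{c-1}/2$ recurrence. Tracking the constant $4^{(c+4)(c-1)/2}$ cleanly through this double induction is tedious but mechanical once the shape of the recursion is pinned down; the analogous bookkeeping for the canonical-edge argument giving the $3^{(c-1)/3}n^2$ bound is the second delicate point, essentially requiring that a suitable edge with bounded common neighborhood can be located inside \emph{every} maximal clique of size at least $2$.
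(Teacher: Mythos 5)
Both of your counting arguments have genuine gaps, even though your structural lemma (that the common neighborhood of a $t$-clique is $(c-t)$-closed, and that maximal cliques through a set correspond to maximal cliques of its common neighborhood) is correct and is indeed the paper's Lemma~\ref{lem:nbhd} in the $t=1$ case. For the $3^{(c-1)/3}n^2$ bound, your canonical-edge scheme cannot work: for an edge $(u,v)$ \emph{inside} a clique, $u$ and $v$ are adjacent, so the $c$-closed condition places no bound whatsoever on $|N(u)\cap N(v)|$, and the Moon--Moser count of $3^{(c-1)/3}$ per pair is only available when the common neighborhood has at most $c-1$ vertices --- which you get exclusively for \emph{non-adjacent} pairs. (Consider $K_n$, which is $c$-closed vacuously: every edge in its unique maximal clique has $n-2$ common neighbors, so no ``suitable edge'' exists once cliques exceed $c+1$ vertices.) The paper's proof instead peels an arbitrary vertex $v$, absorbs the cliques that remain maximal after deleting $v$ into a recursion $F(n-1,c)$, and charges each genuinely new clique $K\ni v$ to a \emph{non-adjacent} witness $u\notin N(v)$ with $K\setminus\{v\}$ maximal in $G[N(u)\cap N(v)]$, a set of size less than $c$; this is precisely the step your outline is missing.

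For the $4^{(c+4)(c-1)/2}n^{2-2^{1-c}}$ bound, your target recurrence $\alpha_c=1+\alpha_{c-1}/2$ is the right one, but the proposed route via moment identities does not deliver it. The inequality $\sum_K|K|\le n\,M(c-1,n)$ forfeits the square-root savings at the outset: to obtain something behaving like $n\sqrt{M(c-1,n)}$ you need each neighborhood to have only $O(\sqrt n)$ vertices, i.e., you need a maximum-degree dichotomy, which is exactly how the paper proceeds (if $\Delta(G)\le n^{1/2}$, then $F(n,c)\le nF(\Delta(G),c-1)$ with $\Delta(G)\le\sqrt n$ already closes the induction; if $\Delta(G)>n^{1/2}$, the paper peels off $N(v)\cup\{v\}$ for a maximum-degree vertex and counts the cliques meeting it by grouping according to $S=K\cap N(v)$, using that each vertex at distance two from $v$ lies in $N(S)$ for at most $2^{c-1}$ sets $S$, plus convexity). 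Your small/large threshold split also cannot control small maximal cliques: a maximal clique of size $2$ contributes only $1$ to both $\sum_K|K|$ and $\sum_K\binom{|K|}{2}$, so neither identity bounds their number below the trivial edge count, which can already exceed $n^{2-2^{1-c}}$. The high-degree case is the genuinely hard part of the paper's argument, and your plan contains no substitute for it.
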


For example, 3-closed, 4-closed, and 5-closed graphs have $O(n^{3/2})$, $O(n^{7/4})$, and $O(n^{15/8})$ maximal cliques respectively.

The proof of the first bound listed in Theorem~\ref{intro:bound} extends to weakly $c$-closed graphs, giving the following result.

\begin{theorem}\label{intro:weak}
Any weakly $c$-closed graph on $n$ vertices has at most $3^{(c-1)/3} n^2$ maximal cliques.
\end{theorem}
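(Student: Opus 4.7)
The plan is to adapt the proof of the first bound in Theorem~\ref{intro:bound}, replacing the global $c$-closure condition with the local consequence available from the weak-closure ordering. Fix such an ordering $v_1, v_2, \ldots, v_n$ of $G$, and write $N^+(v_i) := N(v_i) \cap \{v_{i+1}, \ldots, v_n\}$. The weak-closure property at $v_i$ gives that for every $w \in \{v_{i+1}, \ldots, v_n\} \setminus N(v_i)$ one has $|N(w) \cap N^+(v_i)| \leq c - 1$; this is exactly the kind of bounded-common-neighborhood statement that drives the $c$-closed proof, now localized to the forward neighborhood of $v_i$ rather than being available for every pair in the graph.

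The first step is a standard reduction. For each maximal clique $K$ of $G$ with $|K| \geq 2$, let $v_i$ be the smallest-index vertex of $K$ in the ordering. Then $K \setminus \{v_i\}$ is a maximal clique of $G[N^+(v_i)]$: any $x \in N^+(v_i) \setminus K$ adjacent to every element of $K \setminus \{v_i\}$ would also extend $K$ in $G$, contradicting maximality. Including the at most $n$ singleton cliques yields $M(G) \leq n + \sum_{i=1}^n M(G[N^+(v_i)])$, where $M(\cdot)$ denotes the number of maximal cliques.

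The technical core is to bound each $M(G[N^+(v_i)])$ by roughly $3^{(c-1)/3} \cdot |N^+(v_i)|$; summing then uses $\sum_i |N^+(v_i)| = |E(G)| \leq \binom{n}{2}$ to produce the stated bound $3^{(c-1)/3} n^2$ (the singleton term is absorbed for $n \geq 2$). My idea is to charge each maximal clique $K'$ of $G[N^+(v_i)]$ to a ``witness'' non-neighbor $w$ of $v_i$ in $\{v_{i+1}, \ldots, v_n\}$ with $K' \subseteq N(w)$. Such a $w$ forces $K' \subseteq N(w) \cap N^+(v_i)$, a set of size at most $c - 1$, so by Moon--Moser at most $3^{(c-1)/3}$ maximal cliques are charged per witness. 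The main obstacle I anticipate is the case of a maximal clique $K'$ of $G[N^+(v_i)]$ that is \emph{not} dominated by any single non-neighbor of $v_i$. To handle these I would recurse on $G[N^+(v_i)]$ using its inherited weak-closure ordering and apply the same charging at the first vertex of the recursion, iterating until a dominating witness appears or the forward neighborhood is exhausted; since the forward neighborhoods strictly shrink at each step, the recursion terminates and the accumulated count remains linear in $|N^+(v_i)|$.
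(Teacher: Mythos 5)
Your reduction to the forward neighborhoods and your treatment of the \emph{witnessed} cliques are sound: if some later non-neighbor $w$ of $v_i$ satisfies $K'\subseteq N(w)$, then $K'$ is a maximal clique of $G[N(w)\cap N^+(v_i)]$, a graph on at most $c-1$ vertices by the weak-closure property at $v_i$, so each pair $(v_i,w)$ accounts for at most $3^{(c-1)/3}$ cliques and the total over all such pairs is at most $3^{(c-1)/3}\binom{n}{2}$. (This is exactly the paper's count of ``type~3'' cliques.) The genuine gap is the unwitnessed case, which you flag as an obstacle but do not resolve. A maximal clique $K'$ of $G[N^+(v_i)]$ with no witness is precisely one that is also maximal in $G[\{v_{i+1},\dots,v_n\}]$, and there can be far more than $|N^+(v_i)|$ of them. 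For a concrete counterexample to your claimed invariant, let $v_1$ be adjacent to all other vertices and let $G-v_1$ be the point--line incidence graph of a projective plane: $G$ is $3$-closed, $N^+(v_1)=V(G)\setminus\{v_1\}$, every maximal clique of $G[N^+(v_1)]$ is unwitnessed (there are no non-neighbors of $v_1$), and $M(G[N^+(v_1)])=\Theta(n^{3/2})$, not $O(n)$. So ``the accumulated count remains linear in $|N^+(v_i)|$'' is false, and without it your summation $\sum_i |N^+(v_i)|\le\binom{n}{2}$ no longer yields the theorem; recursing into the neighborhoods only gives a bound of the form $\sum_i M(G[N^+(v_i)])$, which in general requires the very statement being proved (and, bounded term-by-term by a quadratic, would give $O(n^3)$).

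The repair is to recurse on the \emph{suffix} graph rather than on the neighborhood, which is what the paper does. Writing $G_i=G[\{v_{i+1},\dots,v_n\}]$, every maximal clique of $G_{i-1}$ that either avoids $v_i$, or contains $v_i$ with $K\setminus\{v_i\}$ still maximal in $G_i$, corresponds bijectively to a maximal clique of $G_i$ (a maximal clique $L$ of $G_i$ extends to exactly one maximal clique of $G_{i-1}$, namely $L\cup\{v_i\}$ if $L\subseteq N(v_i)$ and $L$ itself otherwise). Only the remaining ``type~3'' cliques --- exactly your witnessed ones --- must be paid for at step $i$, at cost at most $3^{(c-1)/3}n$. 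This gives $F(G_{i-1})\le F(G_i)+3^{(c-1)/3}n$, which telescopes to $3^{(c-1)/3}\binom{n+1}{2}\le 3^{(c-1)/3}n^2$. Your witness-charging is the correct key lemma; what is missing is this injection, which is what controls the unwitnessed cliques without ever recursing into a neighborhood.
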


In Appendix~\ref{app:weak}, we give experimental results showing that well-studied social networks are weakly $c$-closed for modest values of $c$. Note that Theorem~\ref{intro:weak} is exponential in the even smaller value of $(c-1)/3$.

Since in any graph all maximal cliques can be listed in $O(mn)$ time per maximal clique, Theorem~\ref{intro:bound} proves that listing all maximal cliques in a $c$-closed graph is fixed-parameter tractable (i.e. has running time $f(c)n^\alpha$ for constant $\alpha$). We give an algorithm for listing all maximal cliques in a $c$-closed graph that runs faster than applying the $O(mn)$-per-clique algorithm as a black box. Our algorithm follows naturally from the proof of Theorem~\ref{intro:bound} and gives the following theorem, where $p(n,c)$ denotes the time to list all wedges (induced 2-paths) in a $c$-closed graph on $n$ vertices. A result of G\k{a}sieniec, Kowaluk, and Lingas~\cite{gkasieniec2009faster} implies that $p(n,c)= O(n^{2+o(1)}c + c^{(3-\omega-\alpha)/(1-\alpha)}n^\omega  + n^\omega \log n)$ where $\omega$ is the matrix multiplication exponent and $\alpha>0.29$.

\begin{theorem}\label{intro:alg}
In any $c$-closed graph,
a set of cliques containing all maximal cliques can be generated in time $O(p(n,c)+3^{c/3}n^2)$. The exact set of all maximal cliques in any $c$-closed graph can be generated in time $O(p(n,c)+3^{c/3}2^ccn^2)$.
\end{theorem}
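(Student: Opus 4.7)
The plan is to translate the counting argument underlying Theorem~\ref{intro:bound} into an explicit enumeration procedure. That argument associates to every maximal clique $K$ of a $c$-closed graph $G$ a canonical pair of vertices $(u,v)$ together with a maximal clique $S$ of the induced subgraph $G[N(u)\cap N(v)]$; when $(u,v)$ is a non-edge, $c$-closedness forces $|N(u)\cap N(v)|\le c-1$, and the Moon--Moser bound caps the number of such inner cliques at $3^{(c-1)/3}$ per pair. Summing over the $O(n^2)$ pairs reproduces the counting bound, and the algorithm simply inverts this encoding.

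First I would list every wedge in time $p(n,c)$. This preprocessing exposes each common-neighbor set $N(u)\cap N(v)$ explicitly, together with the bound $|N(u)\cap N(v)|\le c-1$ for non-edge pairs, and lets the induced subgraph $G[N(u)\cap N(v)]$ be assembled in time $O(c^2)$ per pair. Next, for each candidate pair $(u,v)$ supplied by the structural lemma underlying Theorem~\ref{intro:bound}, I would run any standard maximal-clique enumerator (for instance Bron--Kerbosch) on the at-most-$(c-1)$-vertex graph $G[N(u)\cap N(v)]$. This emits its $\le 3^{(c-1)/3}$ maximal cliques in $O(3^{c/3})$ time, and augmenting each such inner clique with the attached vertex or vertices from $\{u,v\}$ yields a candidate clique of $G$. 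The aggregate generation time is $O(p(n,c)+3^{c/3}n^2)$, and by the structural lemma every maximal clique of $G$ occurs among the candidates, which establishes the first bound.

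To isolate the exact set of maximal cliques I would then filter each candidate $C$ by testing whether some $w\in V\setminus C$ is adjacent to every vertex of $C$. Because $|C|=O(c)$, the test can be performed in $O(2^c c)$ time per candidate by enumerating the $\le 2^c$ subsets of the bounded-size ``core'' of $C$ and cross-referencing the precomputed common-neighbor lists to rule out any dominating vertex. Summed over the $O(3^{c/3}n^2)$ candidates, this contributes the extra $O(3^{c/3}2^c c n^2)$ term.

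The main obstacle is the structural lemma itself: one must show that every maximal clique of $G$ can be recovered from an enumeration indexed by an identifiable set of $O(n^2)$ pairs, each of whose common neighborhood has size at most $c-1$ so that Moon--Moser applies. This is precisely the combinatorial content of the proof of Theorem~\ref{intro:bound}; once it is in hand, correctness of the enumeration and both running-time bounds follow immediately from the Moon--Moser count inside each common-neighborhood subgraph and the routine maximality check.
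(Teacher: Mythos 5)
There is a genuine gap: the ``structural lemma'' your algorithm inverts is not what the proof of Theorem~\ref{intro:bound} establishes, and in the flat form you state it, it is false. The proof of Theorem~\ref{thm:init} is a peeling argument: it fixes a vertex $v$, and only the \emph{type~3} cliques at that level (those containing $v$ whose remainder is not maximal in $G\setminus\{v\}$) are charged to a non-adjacent pair $(v,u)$ and to a maximal clique of $G[N(v)\cap N(u)]$; the type~1 and type~2 cliques are handed to the recursion on $G\setminus\{v\}$ and recovered by an ``extend by $v$ if possible'' step. Consequently it is not true that every maximal clique $K$ of $G$ equals a vertex of a non-adjacent pair plus a maximal clique of a common neighborhood of size at most $c-1$. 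The complete graph $K_n$ (or a disjoint union of cliques) is $c$-closed for every $c$, has a maximal clique of size $n$, and has no non-adjacent pairs at all, so an enumeration indexed by such pairs produces nothing; allowing adjacent pairs instead lets $|N(u)\cap N(v)|$ be as large as $n-2$, destroying the $3^{c/3}$ bound. Even for cliques that do become type~3 at some recursion depth $i$, the witness $u$ is only guaranteed to contain $K\cap\{v_{i+1},\dots,v_n\}$ in its neighborhood; the vertices of $K$ peeled at earlier levels need not be adjacent to $u$, so $K\setminus\{v_i\}\not\subseteq N(v_i)\cap N(u)$ in $G$. Any correct algorithm must reproduce the recursion, which is exactly what the paper's Algorithm~\ref{alg} does: it recurses on $G\setminus\{v\}$, stores the maximal cliques of the smaller graph in a forest so that the per-level ``add $v$ where possible'' step is cheap, and invokes the Tomita--Tanaka--Takahashi enumerator only on the subgraphs $G[N(v)\cap N(u)]$ arising from type~3 cliques at each level; this gives $O(3^{c/3}n)$ work per level and hence $O(3^{c/3}n^2)$ overall, plus the wedge preprocessing $p(n,c)$ that you correctly identify.

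A secondary problem is the filtering step for the exact output. Candidate cliques built up through the recursive extensions can have up to $n$ vertices, so ``$|C|=O(c)$'' does not hold for all candidates, and testing whether some $w\in V\setminus C$ is adjacent to every vertex of $C$ is not an $O(2^c c)$ operation as described; enumerating subsets of $C$ does not by itself detect a dominating vertex. The paper avoids this by filtering only the type~3 candidates generated at each level (each has at most $c$ vertices, namely $v$ together with at most $c-1$ vertices of $N(v)\cap N(u)$): it hashes these candidates and, for each one, checks its at most $2^{c}$ subsets against the hash set to detect containment in another candidate, which is what produces the $O(3^{c/3}2^c c n^2)$ term.
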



Non-trivial lower bounds for the number of maximal cliques in a $c$-closed graph were previously known only for extreme values of $c$. A 2-closed graph can have $n^{3/2}$ maximal cliques~\cite{eschen2011graphs}. The classic Moon-Moser graph (with additional isolated vertices) is $(n-2)$-closed and has $3^{\lfloor n/3\rfloor}$ maximal cliques~\cite{moon1965cliques}. This graph consists of the complete multipartite graph with $\lfloor n/3 \rfloor$ parts of size $3$, and possibly additional isolated vertices.
By taking a disjoint union of $n/(c+2)$ Moon-Moser graphs on $(c+2)$ vertices, we can construct a $c$-closed graph on $n$ vertices with $\Omega(c^{-1}3^{c/3}n)$ maximal cliques for all $n \geq c$.
We give improved lower bounds for intermediate values of $c$.

\begin{theorem}\label{intro:lb} 
For any positive integer $c$, there are $c$-closed graphs with $n$ vertices and 
$\Omega(c^{-3/2}2^{c/2}n^{3/2})$ maximal cliques.
\end{theorem}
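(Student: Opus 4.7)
The plan is to build $G$ by combining two classical extremal objects: a bipartite $C_4$-free host graph contributing the $n^{3/2}$ factor, and cocktail-party gadgets on each host edge contributing the $2^{c/2}$ factor. Set $k = \lfloor c/2 \rfloor$ and let $H$ be a bipartite $C_4$-free graph with parts $L, R$ of size $m = \Theta(n/c)$ and $|E(H)| = \Theta(m^{3/2})$; the incidence graph of a projective plane of suitable prime-power order (padded with isolated vertices) supplies such an $H$. Blow up each $v \in V(H)$ into a set $L_v$ of $k$ copies, so $|V(G)| \le n$. For every edge $(u,v) \in E(H)$, install a cocktail-party graph on $L_u \cup L_v$: declare $(u,i)$ and $(v,i)$ non-adjacent for each $i \in [k]$, and connect every other pair of $L_u \cup L_v$. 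No edges are added between blobs whose hosts are non-adjacent in $H$.

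To verify $c$-closedness, let $x \ne y$ be non-adjacent in $G$; they must lie in distinct blobs $L_u, L_v$. If $(u,v) \notin E(H)$, any common neighbor of $x, y$ lies in some $L_w$ with $w \in N_H(u) \cap N_H(v)$; $C_4$-freeness gives at most one such $w$, and within $L_w$ at most $k - 1$ vertices are adjacent to both $x$ and $y$, so there are at most $k - 1 < c$ common neighbors. If $(u,v) \in E(H)$ and $x, y$ form a matched pair, bipartiteness of $H$ forbids any common neighbor of $u, v$ in $V(H)$, so no blob outside $L_u \cup L_v$ contributes, and inside the cocktail-party structure there are exactly $2(k-1) \le c - 2$ common neighbors. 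Both counts are strictly below $c$.

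For counting maximal cliques, triangle-freeness of $H$ forces every clique of $G$ to live inside $L_u \cup L_v$ for a single edge $(u,v) \in E(H)$. The maximal cliques of the cocktail party on $L_u \cup L_v$ are exactly its $2^k$ transversals of the matching, and the same bipartiteness and $C_4$-freeness arguments show that no outside vertex can extend any such transversal, so each is maximal in $G$. Two transversals per edge are the ``pure'' cliques $L_u$ and $L_v$ that get shared with other edges incident to $u$ or $v$, while the remaining $2^k - 2$ ``mixed'' transversals are pairwise distinct across edges. Thus
\begin{equation*}
\#\{\text{maximal cliques in } G\} \;\ge\; |E(H)| \cdot (2^k - 2) \;=\; \Theta\!\bigl(m^{3/2} \cdot 2^{c/2}\bigr) \;=\; \Theta\!\bigl(c^{-3/2}\, 2^{c/2}\, n^{3/2}\bigr).
\end{equation*}
The boundary values $c \in \{2, 3\}$, where $2^k - 2 = 0$, reduce to the target $\Omega(n^{3/2})$, which is already realized by the classical triangle- and $C_4$-free $2$-closed construction cited in the paper.

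The main obstacle is the $c$-closedness check for matched pairs: the bound $c - 2$ hinges entirely on bipartiteness of $H$, since any $H$-triangle through $u, v, w$ would drag the entire blob $L_w$ of $\Theta(c)$ vertices into the common neighborhood of a matched pair and destroy the closure property. Bipartiteness of the host is therefore essential, and it is the reason a naively denser host graph cannot be used. A secondary bookkeeping point is the ``pure versus mixed'' accounting of transversals, but this only costs a constant factor in the final count.
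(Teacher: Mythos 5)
Your construction is essentially identical to the paper's: blow up each vertex of a dense host graph without $C_4$'s or triangles into a clique of size $\approx c/2$, join blobs across host edges by a complete bipartite graph minus a perfect matching, and count one transversal per matching per host edge; the paper uses an extremal girth-$5$ graph where you use the bipartite incidence graph of a projective plane, and both the $c$-closedness case analysis and the $2^{c/2}\cdot|E(H)|$ count match the paper's (your explicit ``pure versus mixed'' accounting is a minor refinement of the paper's distinctness claim). The proposal is correct; the only quibble is that bipartiteness of the host is not essential as you assert---triangle-freeness together with $C_4$-freeness is what the argument uses, which is why the paper's non-bipartite girth-$5$ host works equally well.
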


It is an open problem to determine the exact exponent of $n$ (between $3/2$ and $2-2^{1-c}$) in the expression for the maximum number of maximal cliques in a $c$-closed graph.


\subsection{Related work}
There are only a few algorithmic results for graph classes motivated by social networks. Although a number of NP-hard problems remain NP-hard on graphs with a power-law degree distribution~\cite{Ferrante2007}, several problems in P have been shown to be easier on such graphs. Brach, Cygan, Lacki, and Sankowski~\cite{brach2016algorithmic} give faster algorithms for transitive closure, maximum matching, determinant, PageRank and matrix inverse. Borassi, Crescenzi, and Trevisan~\cite{borassi2016axiomatic} assume several axioms satisfied by real-world graphs, one being a power-law degree distribution, and give faster algorithms for diameter, radius, distance oracles, and computing the most ``central'' vertices. Motivated by triadic closure, Gupta, Roughgarden, and Seshadhri~\cite{gupta2016decompositions} define \textit{triangle-dense} graphs and prove relevant structural results. Intuitively, they prove that if a constant fraction of two-hop paths are closed into triangles, then the graph must contain many dense clusters.

For general graphs, Moon and Moser prove that the maximum possible number of maximal cliques in a graph on $n$ vertices is $3^{n/3}$ (realized by a complete $n/3$-partite graph)~\cite{moon1965cliques}. Tomita, Tanaka, and Takahashi prove that the time to generate all maximal cliques in any $n$-vertex graph is also $O(3^{n/3})$~\cite{tomita}.

The clique problem has been studied on 2-closed graphs (under a different name). Eschen, Hoang, Spinrad, and Sritharan~\cite{eschen2011graphs} show that the maximum number of maximal cliques in a 2-closed graph is $O(n^{3/2})$. They also show a matching lower bound via a projective planes construction. Suppose $n=p^2+p+1$ for a positive integer $p$ and consider a finite projective plane on $n$ points (and hence with $n$ lines, see e.g.~\cite{projective_planes}). Let $G$ denote the bipartite graph representing the point-line incidence matrix.  The defining properties of finite projective planes imply that no two vertices have two common neighbors, so the 2-closed condition is vacuously satisfied.  Every vertex of~$G$ has degree $p+1$, so the graph has $\Theta(n^{3/2})$ edges, each a maximal clique. 

The clique problem has also been studied on other special classes of graphs such as graphs embeddable on a surface~\cite{dujmovic2011maximum} and graphs of bounded degeneracy~\cite{eppstein2010listing}. Degeneracy is a measure of everywhere sparsity. More formally, the degeneracy of a graph $G$ is the smallest value $d$ such that every nonempty subgraph
of $G$ contains a vertex of degree at most $d$. Eppstein et al. show that the maximum number of maximal cliques in a graph of degeneracy $d$ is $O(n3^{d/3})$. The degeneracy of a graph, however, can be much larger than its $c$-closure. For example, the degeneracy of a graph is at least the size of a maximum clique, while even in 1-closed graphs, the size of the maximum clique can be arbitrarily large.

Clique counting is a classical problem in extremal combinatorics. One fundamental question is to count the \emph{minimum} number of cliques in graphs with fixed number of edges i.e. to show that graphs with few cliques must have few edges. This simple question turns out to be a complex problem, and is settled for $K_3$ by Razborov~\cite{Raz} by flag algebra, $K_4$ by Nikiforov~\cite{Niki} by a combination of combinatorics and analytical arguments, and
all $K_t$ by Reiher~\cite{Reiher} by generalizing the argument of flag algebra to all sizes of cliques. 

There has also been a long line of work in combinatorics on counting (not necessarily maximal) cliques in graphs with certain excluded subgraphs, subdivisions, or minors. Most recently, Fox and Wei give an asymptotically tight bound on the maximum number of cliques in graphs with forbidden minors~\cite{fox2017number}, and an upper bound on the maximum number of cliques in graphs with forbidden subdivisions or immersions~\cite{fox2016number}. 

Many problems in combinatorics can be phrased as counting the number of cliques or independent sets in a (hyper)graph. For example, the problems of finding the volume of the metric polytope and counting the number of $n$-vertex $H$-free graphs (for any fixed subgraph $H$) can be translated into clique counting problems. The recently developed ``container method''~\cite{container1,container2} is a powerful tool to bound the number of cliques in (hyper)graphs and can be used to tackle a great range of problems. 

\subsection{Organization}

In Section~\ref{sec:init} we prove the first bound listed in Theorem~\ref{intro:bound}, state Theorem~\ref{intro:weak}, and introduce the proof of Theorem~\ref{intro:alg}. In Section~\ref{sec:imp} we prove the second bound listed in Theorem~\ref{intro:bound} (which has improved dependence on $n$). In Section~\ref{sec:lb} we prove Theorem~\ref{intro:lb}.

In Appendix~\ref{app:alg} we give the full proof of Theorem~\ref{intro:alg}. In Appendix~\ref{app:weak} we further discuss weakly $c$-closed graphs including relevant experimental results. In Appendix~\ref{app:general}, we give generalizations of $c$-closed and weakly $c$-closed graphs and extensions of and Theorem~\ref{intro:bound}. In Appendix~\ref{app:kk} we give a preliminary result regarding the number of maximal cliques in $c$-closed $K_k$-free graphs.

\subsection{Notation}

All graphs $G(V,E)$ are simple, undirected and unweighted. For any $v\in V$, let $N(v)$ denote the neighborhood of $v$. When the current graph is ambiguous, $N_G(v)$ will denote the neighborhood of $v$ in $G$. For any $S\subseteq V$, let $G[S]$ denote the subgraph of $G$ induced by $S$. 

\section{Initial Bound and Algorithm}\label{sec:init}

\subsection{Bound on number of maximal cliques}
In this section, we prove the following bound on the number of maximal cliques in a $c$-closed graph and show that this bound carries over to weakly $c$-closed graphs. Let $F(n,c)$ denote the maximum possible number of maximal cliques in a $c$-closed graph on $n$ vertices. The following theorem uses a natural peeling process and obtain an initial upper bound on the number of maximal cliques. A more involved analysis, Theorem \ref{thm:imp} which gives a tighter upper bound, is delayed to later.
\begin{theorem}[restatement of part of Theorem~\ref{intro:bound}]\label{thm:init}
For all positive integers $c, n$, we have $F(n,c) \leq 3^{(c-1)/3} n^2$.
\end{theorem}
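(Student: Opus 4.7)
The plan is to prove $F(n,c) \leq 3^{(c-1)/3} n^2$ by induction on $n$, via a single-vertex peeling argument that works equally well for weakly $c$-closed graphs (and thereby also gives Theorem~\ref{intro:weak}). Choose $v_1$ to be the first vertex of the weakly $c$-closed ordering; for a $c$-closed graph any vertex will do, since every non-adjacent pair already has fewer than $c$ common neighbors.

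The crux is to compare $f(G)$ to $f(G-v_1)$ via a careful partition of maximal cliques. Let $A$ denote the maximal cliques of $G[N(v_1)]$, $B$ those of $G$ avoiding $v_1$, $C$ those of $G-v_1$ contained in $N(v_1)$, and $D$ those of $G-v_1$ not contained in $N(v_1)$. A short check gives $f(G)=|A|+|B|$, $f(G-v_1)=|C|+|D|$, and $|B|=|D|$ (a maximal clique of $G$ avoiding $v_1$ cannot lie entirely in $N(v_1)$, else $v_1$ would extend it, and conversely). Moreover $C \subseteq A$, since any element of $C$ is a fortiori maximal inside $G[N(v_1)]$. Hence
\[
f(G) - f(G-v_1) \;=\; |A|-|C| \;=\; |A \setminus C|,
\]
where $A \setminus C$ consists precisely of those maximal cliques $C' \subseteq N(v_1)$ of $G[N(v_1)]$ that become non-maximal in $G-v_1$ because some $w \in V \setminus N[v_1]$ is adjacent to every vertex of $C'$.

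The main work is bounding $|A \setminus C|$, and this is where the weakly $c$-closed property of $v_1$ enters. Each witness $w$ is a non-neighbor of $v_1$, so $|N(v_1) \cap N(w)| < c$. Any $C' \in A \setminus C$ witnessed by $w$ lies in the set $N(v_1)\cap N(w)$ (of size at most $c-1$) and remains maximal inside $G[N(v_1) \cap N(w)]$; hence by Moon--Moser there are at most $3^{(c-1)/3}$ such $C'$ for each fixed $w$. Summing over the at most $n-1$ possible witnesses,
\[
|A \setminus C| \;\leq\; (n-1)\cdot 3^{(c-1)/3}.
\]

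Combining with the inductive hypothesis $f(G-v_1) \leq 3^{(c-1)/3}(n-1)^2$ closes the induction:
\[
f(G) \;\leq\; 3^{(c-1)/3}(n-1)^2 + 3^{(c-1)/3}(n-1) \;=\; 3^{(c-1)/3}\,n(n-1) \;\leq\; 3^{(c-1)/3}\,n^2,
\]
with the base case $n \leq 1$ immediate. The main conceptual obstacle is finding the right identity $f(G)-f(G-v_1)=|A \setminus C|$: the naive bound $f(G) \leq f(G-v_1) + f(G[N(v_1)])$ is far too weak, since $f(G[N(v_1)])$ can be comparable to $f(G-v_1)$ (for instance when $v_1$ is nearly universal, as in the construction joining a universal vertex to a projective-plane incidence graph), so that recursion would double at every step.
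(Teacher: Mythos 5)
Your proposal is correct and follows essentially the same route as the paper: peel off a single vertex $v_1$ (arbitrary in the $c$-closed case, the first vertex of the elimination ordering in the weakly $c$-closed case), charge the cliques that are ``new'' relative to $G-v_1$ to witnesses $w\notin N(v_1)$, and apply the Moon--Moser bound inside $G[N(v_1)\cap N(w)]$, which has fewer than $c$ vertices. Your sets $A\setminus C$ are exactly the paper's ``type 3'' cliques and your identity $f(G)-f(G-v_1)=|A\setminus C|$ is a slightly sharper restatement of the paper's inequality that types 1 and 2 together contribute at most $F(n-1,c)$; the resulting recurrence and conclusion are the same.
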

\begin{proof}
Let $G$ be a $c$-closed graph on $n$ vertices and let $v \in V(G)$ be an arbitrary vertex. Every maximal clique $K\subseteq G$ is of one of the following types: 
\begin{enumerate}
\itemsep0em 
\item The clique $K$ does not contain vertex $v$; and $K$ is maximal in $G \setminus \{v\}$. 
\item The clique $K$ contains vertex $v$; and $K \setminus \{v\}$ is maximal in $G \setminus \{v\}$.
\item The clique $K$ contains vertex $v$; and $K \setminus \{v\}$ is not maximal in $G \setminus \{v\}$.
\end{enumerate}


Bounding the number of maximal cliques of type 1 and 2 is straightforward because every such clique can be obtained by starting with a clique maximal in $G \setminus \{v\}$ and extending it to include vertex $v$ if possible. Therefore, the number of maximal cliques of types 1 and 2 combined is at most $F(n-1,c)$.

Type 3 cliques are maximal in $N(v)$, but not in $G\sm\{v\}$. 
We will prove that the number of maximal cliques of type 3 is at most $3^{(c-1)/3}n$, crucially using the $c$-closed property. Figure~\ref{f:case3} shows a maximal clique $K$ of type 3. 
\begin{figure}[ht]
\centering
\includegraphics[width=.4\textwidth]{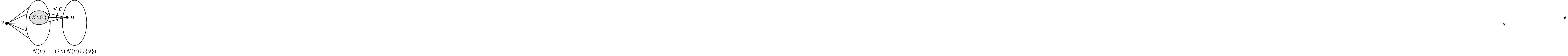}
\caption{A maximal clique $K$ of type 3: $K$ contains vertex $v$ and $K \setminus \{v\}$ is not maximal in $G \setminus \{v\}$. Property C asserts that there exists a vertex $u\not\in N(v)$ whose neighborhood contains $K\setminus \{v\}$. Since $G$ is $c$-closed, $|N(u)\cap N(v)|<c$.}
\label{f:case3}
\end{figure} 

We claim that each type 3 maximal clique $K$ satisfies the following three properties.

\begin{enumerate}[label=\Alph*)] 
\itemsep0em 
\item $K \setminus \{v\}$ is a clique in the neighborhood of $v$, and 
\item $K \setminus \{v\}$ is not in the neighborhood of any other vertex in $N(v)$.
\item There exists a vertex $u\not\in N(v)$ whose neighborhood contains $K\setminus \{v\}$.
\end{enumerate}
Property A is clear since $K$ is a clique containing $v$. Property B is true because if we can extend $K \setminus \{v\}$ to include some vertex $w\in N(v)$, then $K$ can also be extended to include $w$ which contradicts the fact that $K$ is maximal. To see property C, note that since $K \setminus \{v\}$ is not a maximal clique in $G \setminus \{v\}$ we can extend the clique $K \setminus \{v\}$ to include some vertex in $G \setminus \{v\}$. By property B, we can extend $K \setminus \{v\}$ to include some vertex $u$ not in $N(v)$. 

Let $u$ be as in property C. Then $K\sm\{v\}$ must be a maximal clique in $G[N(v) \cap N(u)]$ because otherwise we could extend $K \setminus \{v\}$ to some other vertex in $N(v) \cap N(u)$, which contradicts property B.

Thus, the number of type 3 maximal cliques is at most 
\begin{equation}  \sum_{u\in G\setminus (N(v)\cup \{v\})} F(|N(u)\cap N(v)|,c).  \label{eq:simplerec}
\end{equation}
Since $G$ is $c$-closed, $|N(u)\cap N(v)|<c$ for all vertices $u\not\in N(v)$. Then since any $k$-vertex graph has at most $3^{k/3}$ maximal cliques~\cite{moon1965cliques}, \[F(|N(u)\cap N(v)|,c)\leq 3^{(c-1)/3}.\] Thus, the number of type 3 maximal cliques in $G$ is at most $3^{(c-1)/3} n$.

Counting all three types of maximal cliques, we have the following recursive inequality: 
\[ F(n,c)\leq F(n-1,c)+3^{(c-1)/3}n. \]
By induction on $n$ with the base case $F(1,c)=1$, this gives \[F(n,c)\leq 3^{(c-1)/3}{n+1 \choose 2} \leq 3^{(c-1)/3}n^2.\]
\end{proof}

Note that $v$ was chosen arbitrarily and the proof is valid as long as ``$|N(u)\cap N(v)|<c$ for all vertices $u\not\in N(v)$''. Thus, in each recursive level, we
only require the existence of a vertex $v$ in no bad pairs. Equivalently, it suffices to have an ordering of the vertices $\{v_1,v_2,\dots,v_n\}$ such that for all $i$, $v_i$ is in no bad pairs in the graph induced by $\{v_i,v_{i+1},\dots,v_n\}$. This is exactly the definition of a weakly $c$-closed graph. Thus, we get the following theorem.
\begin{theorem}[Restatement of Theorem~\ref{intro:weak}]
For any positive integers $c, n$, there are at most $3^{(c-1)/3} n^2$ maximal cliques in an $n$-vertex weakly $c$-closed graph.
\end{theorem}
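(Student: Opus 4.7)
The plan is to mimic the proof of Theorem~\ref{thm:init} almost verbatim, replacing the global $c$-closed hypothesis by the weaker local condition guaranteed by the vertex ordering. The key observation is that the $c$-closed condition was used in exactly one place: to conclude that $|N(u)\cap N(v)|<c$ for every $u\notin N(v)$, where $v$ was chosen arbitrarily. In the weakly $c$-closed setting, we cannot choose $v$ arbitrarily, but the vertex ordering furnishes us with a specific good choice at every stage of the recursion.

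First I would let $G$ be weakly $c$-closed with witnessing ordering $v_1,v_2,\dots,v_n$, and peel off $v_1$. By definition, $v_1$ lies in no bad pair in $G$, so $|N(u)\cap N(v_1)|<c$ for every $u\in V(G)\setminus(N(v_1)\cup\{v_1\})$. This is precisely the hypothesis used in the proof of Theorem~\ref{thm:init} to bound the type~3 maximal cliques (those containing $v_1$ whose restriction to $G\setminus\{v_1\}$ is not maximal). The same three-property characterization A), B), C) applies unchanged, and the same estimate
\[
\#\{\text{type 3 cliques}\}\ \leq\ \sum_{u\notin N(v_1)\cup\{v_1\}} F\bigl(|N(u)\cap N(v_1)|,c\bigr)\ \leq\ 3^{(c-1)/3}\cdot n
\]
goes through, using the Moon--Moser bound $3^{k/3}$ on $k$-vertex graphs.

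Next I would handle the recursive step. Let $F_{\mathrm{w}}(n,c)$ denote the maximum number of maximal cliques in an $n$-vertex weakly $c$-closed graph. Types 1 and 2 combined contribute at most $F_{\mathrm{w}}(n-1,c)$ maximal cliques, provided that the residual graph $G\setminus\{v_1\}$ is itself weakly $c$-closed on $n-1$ vertices. This is immediate because the truncated sequence $v_2,v_3,\dots,v_n$ still satisfies the defining ordering condition: for each $i\geq 2$, the vertex $v_i$ is in no bad pair in the graph induced by $\{v_i,\dots,v_n\}$, which is the same induced subgraph considered in the original ordering. Thus weak $c$-closedness is preserved by deleting the first vertex of the ordering, and the recursion closes on itself.

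Putting the pieces together yields the recurrence $F_{\mathrm{w}}(n,c)\leq F_{\mathrm{w}}(n-1,c)+3^{(c-1)/3}n$, with base case $F_{\mathrm{w}}(1,c)=1$, which solves by induction to $F_{\mathrm{w}}(n,c)\leq 3^{(c-1)/3}\binom{n+1}{2}\leq 3^{(c-1)/3}n^2$. There is no real obstacle here; the only thing to verify carefully is the hereditary-style fact that the suffix of a witnessing ordering is itself a witnessing ordering, which is transparent from the definition. In essence, the proof of Theorem~\ref{thm:init} was already written in a form that only uses the existence of a single ``good'' peeling vertex per recursive step, and the weakly $c$-closed hypothesis is precisely the statement that such a peeling sequence exists.
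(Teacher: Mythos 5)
Your proposal is correct and follows exactly the paper's own route: the paper proves this theorem by observing that the proof of Theorem~\ref{thm:init} only uses the $c$-closed hypothesis to guarantee $|N(u)\cap N(v)|<c$ for the peeled vertex $v$, and that a weakly $c$-closed ordering supplies such a vertex at every recursive level. Your additional verification that the suffix of the witnessing ordering remains a witnessing ordering for $G\setminus\{v_1\}$ is the same (implicit) hereditary fact the paper relies on.
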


\subsection{Algorithm to generate all maximal cliques}
Recall that $p(n,c)$ denotes the time to list all wedges (induced 2-paths) in a $c$-closed graph on $n$ vertices. A result of G\k{a}sieniec, Kowaluk, and Lingas~\cite{gkasieniec2009faster} implies that $p(n,c)= O(n^{2+o(1)}c + c^{(3-\omega-\alpha)/(1-\alpha)}n^\omega  + n^\omega \log n)$ where $\omega$ is the matrix multiplication exponent and $\alpha>0.29$.

\begin{theorem}[restatement of Theorem~\ref{intro:alg}]\label{thm:alg}
A superset of the maximal cliques in any $c$-closed graph can be generated in time $O(p(n,c)+3^{c/3}n^2)$. The exact set of all maximal cliques in any $c$-closed graph can be generated in time $O(p(n,c)+3^{c/3}2^ccn^2)$.
\end{theorem}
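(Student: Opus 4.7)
I adapt the recursion in the proof of Theorem~\ref{thm:init} into an algorithm. Fix a vertex ordering $v_1,\dots,v_n$ (for weakly $c$-closed graphs, the ordering furnished by the definition), and let $G_i:=G[\{v_i,\dots,v_n\}]$. At each step the algorithm processes $G_i$ by pivoting on $v_i$; property~(C) from the proof of Theorem~\ref{thm:init} shows that every maximal clique of $G_i$ of type~3 with respect to $v_i$ has the form $K'\cup\{v_i\}$, where $K'$ is a maximal clique of the subgraph induced on $N_{G_i}(v_i)\cap N_{G_i}(u)$ for some non-neighbor $u$ of $v_i$ in $G_i$. Since $G$ is $c$-closed, each such common neighborhood has fewer than $c$ vertices, so its at most $3^{(c-1)/3}$ maximal cliques can be enumerated in $O(3^{c/3})$ time via Bron--Kerbosch.

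\textbf{Generating the superset.} First list all induced 2-paths of $G$ in time $p(n,c)$, indexed by endpoint pairs so that every common neighborhood of a non-adjacent pair in any $G_i$ is accessible in output-sensitive time. Sweep $i=n,n-1,\dots,1$, emitting candidates into a list $\mathcal M$ as follows. At step $i$, enumerate all type~3 candidates with respect to $v_i$ as above and insert each $K'\cup\{v_i\}$ into $\mathcal M$; simultaneously carry out the type~1/2 bookkeeping from the proof, which for a previously generated candidate $K'$ either keeps $K'$ or replaces it by $K'\cup\{v_i\}$ depending on whether $K'\subseteq N_{G_i}(v_i)$. By direct induction mirroring the recursion in Theorem~\ref{thm:init}, the final $\mathcal M$ is a superset of the maximal cliques of $G$, and Theorem~\ref{thm:init} bounds $|\mathcal M|=O(3^{c/3}n^2)$. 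The wedge preprocessing costs $p(n,c)$; the total type~3 enumeration work telescopes to $\sum_i(n-i)\cdot 3^{c/3}=O(3^{c/3}n^2)$; and with the careful implementation discussed below, the bookkeeping also fits within this budget.

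\textbf{Exact set and the main obstacle.} To upgrade the superset to the exact set, filter each candidate $K\in\mathcal M$ by checking whether any vertex of $V\setminus K$ is adjacent to every vertex of $K$. Each candidate lies in a set of the form $\{v_i\}\cup(N_{G_i}(v_i)\cap N_{G_i}(u))$ of size at most $c$; batching the at most $2^c$ candidate subsets for a single bad pair together, one $O(cn)$ intersection of the $\le c$ stored neighborhoods decides extendability of each subset in $O(2^c)$ additional time, giving $O(2^c c)$ work per bad pair and $O(p(n,c)+3^{c/3}2^ccn^2)$ overall. The main implementation hurdle is the type~1/2 bookkeeping in the superset phase: a naive sweep over $\mathcal M$ at every level costs $|\mathcal M|\cdot c$ and sums to $\widetilde O(3^{c/3}cn^3)$, overshooting the target. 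The workaround, to be given in Appendix~\ref{app:alg}, is to avoid rescanning and instead charge each possible extension by $v_i$ to the moment a new candidate first appears, exploiting the wedge index to identify in amortized constant time which future pivots extend each candidate.
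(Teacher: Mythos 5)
Your superset phase follows the paper's recursion, but the one place where the argument is not routine---the type~1/2 bookkeeping---is exactly the place you defer to an unspecified amortization, and that deferral hides a genuine gap. ``Charging each possible extension by $v_i$ to the moment a new candidate first appears'' requires knowing, at creation time, which future pivots are adjacent to \emph{all} of a candidate $K$; that is a query about the common neighborhood of a $c$-set, which the wedge index (it stores common neighborhoods of \emph{pairs} only) does not answer in amortized constant time. Computed directly it costs $\Theta(cn)$ per candidate, i.e.\ $\Theta(3^{c/3}cn^3)$ over the $O(3^{c/3}n^2)$ candidates. The paper's resolution is not an amortization but a change of output representation: cliques are stored as root-to-leaf paths in a forest, so extending by $v$ means attaching $v$ as a child of qualifying leaves, located by a depth-first search pruned at the first vertex outside $N(v)$; no clique is ever copied or rescanned. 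Without some such compact representation your sweep does not meet the $O(p(n,c)+3^{c/3}n^2)$ bound.

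The exact-set phase has a second gap. You filter by a global maximality test (is some vertex of $V\setminus K$ adjacent to every vertex of $K$?) and claim that one $O(cn)$ intersection per non-adjacent pair $(v_i,u)$ decides extendability of every candidate inside $S=\{v_i\}\cup(N(v_i)\cap N(u))$. It does not: extendability of $K\subseteq S$ is governed by $\bigcap_{x\in K}N(x)\setminus K$, which contains (and is generally much larger than) $\bigcap_{x\in S}N(x)$, so a proper subset can be extendable even when the full intersection is empty. Any repair that still inspects the whole vertex set (e.g.\ recording the mask $S\cap N(w)$ for each $w$ and testing submask containment) costs $\Omega(n)$ per pair, and there are $\Theta(n^2)$ pairs over all levels, giving $\Omega(n^3)$---outside the claimed $O(p(n,c)+3^{c/3}2^ccn^2)$. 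The paper sidesteps global tests entirely: it only deletes a candidate that is a subset of \emph{another candidate generated at the same level}, implemented with a hash set and $O(2^cc)$ subset lookups per candidate, which is precisely where the $2^cc$ factor in the theorem comes from. You need either that local comparison (together with an argument that it suffices) or a genuinely sublinear maximality oracle for your version to go through.
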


The algorithm follows naturally from the proof of Theorem~\ref{thm:init} with two additional ingredients:
\begin{itemize}
\item A preprocessing step to enumerate all wedges in the graph speeds up the later process of finding the intersection of the neighborhoods of two vertices (i.e. $N(u) \cap N(v)$ from the proof of Theorem~\ref{thm:init}). 
\item An algorithm of Tomita, Tanaka, and Takahashi~\cite{tomita} generates all maximal cliques in any $n$-vertex graph in time $O(3^{n/3})$. We apply this to the recursive calls on the small induced subgraphs $G[N(u) \cap N(v)]$, which have less than $c$ vertices, that arise in handling the type 3 cliques in the proof of Theorem~\ref{thm:init}.
\end{itemize}

We defer the full algorithm description and runtime analysis to Appendix~\ref{app:alg}.

\section{Improved Bound}\label{sec:imp}

Recall that $F(n,c)$ is the maximum number of maximal cliques in a $c$-closed graph on $n$ vertices. 

\begin{theorem}[restatement of part of Theorem~\ref{intro:bound}]\label{thm:imp} For all positive integers $c, n$, we have 
 $F(n,c) \leq 4^{(c+4)(c-1)/2}n^{2-2^{1-c}}$.
\end{theorem}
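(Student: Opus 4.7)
I would prove Theorem~\ref{thm:imp} by induction on $c$, with base case $c = 1$ handled trivially: a 1-closed graph is a disjoint union of cliques with at most $n$ maximal cliques, so $F(n, 1) \leq n = 4^0 \cdot n^{\alpha_1}$.

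For the inductive step from $c-1$ to $c$, the core structural input is that for any vertex $v$ in a $c$-closed graph $G$, the induced subgraph $G[N(v)]$ is $(c-1)$-closed: any non-adjacent pair $x, y \in N(v)$ already shares $v$ as a common neighbor, so by $c$-closure has at most $c - 2$ further common neighbors inside $N(v)$. Since the maximal cliques of $G$ containing $v$ are in bijection with the maximal cliques of $G[N(v)]$, the inductive hypothesis gives at most $4^{(c+3)(c-2)/2}\, d(v)^{\alpha_{c-1}}$ such cliques, yielding the peeling recursion $F(n, c) \leq F(n-1, c) + F(d(v), c-1)$ for any choice of $v$, and hence iteratively $F(n, c) \leq 1 + \sum_{i=1}^{n-1} F(d_i, c-1)$ where $d_i$ is the degree of the vertex peeled at step $i$ (in the corresponding $c$-closed subgraph).

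The critical numerical coincidence is $\alpha_{c-1}/2 = 1 - 2^{1-c} = \alpha_c - 1$. This suggests that if we could always peel a vertex of degree at most $n^{1/2}$, the iteration would give $F(n,c) \leq O(n) \cdot 4^{(c+3)(c-2)/2} \cdot (n^{1/2})^{\alpha_{c-1}} = O(n^{\alpha_c})$, precisely the target exponent. The main obstacle is that $c$-closed graphs need not have a vertex of degree at most $n^{1/2}$---for instance, $K_n$ is $c$-closed with minimum degree $n-1$. I would resolve this via a dichotomy at each peeling step: if some vertex has degree at most $n^{1/2}$, peel it and apply the recursion; otherwise, invoke a direct structural argument for the dense case. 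The dense case exploits the constraint that in a $c$-closed graph, any $c$-clique $T$ has a common neighborhood $G[\bigcap_{v \in T} N(v)]$ that induces a complete graph (two non-adjacent common neighbors would witness $T$ as a set of $c$ common neighbors, violating $c$-closure), so every $c$-clique lies in a unique maximal clique and the number of maximal cliques of size at least $c$ is bounded by the number of $c$-cliques. Maximal cliques of size less than $c$ can be controlled via Theorem~\ref{thm:init} or via an edge-based double count, using that $G[N(u) \cap N(v)]$ is $(c-2)$-closed for each edge $(u,v)$ of $G$ (since any non-adjacent pair in it shares both $u$ and $v$ as common neighbors).

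I expect the technical crux to be implementing the dense-case argument cleanly and combining it with the sparse-case peeling while tracking the explicit constant $A(c) = 4^{(c+4)(c-1)/2}$. Its multiplicative growth rate $A(c)/A(c-1) = 4^{c+1}$ suggests that substantial slack is absorbed both in the dichotomy and in the accumulation of constants across levels of the $c$-induction; getting this bookkeeping to work uniformly---in particular handling the boundary between the two cases at each peeling step without losing more than a constant factor per increment of $c$---will be the most delicate step.
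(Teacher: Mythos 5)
Your overall architecture matches the paper's up to the point of the dense case: induction on $c$, the key observation that $G[N(v)]$ is $(c-1)$-closed, the resulting recursion on the exponent via $\alpha_{c-1}/2 = \alpha_c - 1$, and a dichotomy at degree threshold $n^{1/2}$ are all exactly the paper's ingredients (Lemma~\ref{lem:nbhd} and Case~1 of the proof of Theorem~\ref{thm:imp}). The gap is in your dense case, and it is a genuine one. Your observation that the common neighborhood of any $c$-clique $T$ is a clique (hence $T$ lies in a unique maximal clique) is correct, but the resulting bound ``number of maximal cliques of size $\geq c$ is at most the number of $c$-cliques'' is far too weak to prove the theorem: a $c$-closed graph with minimum degree exceeding $n^{1/2}$ can have $\Theta(n^c)$ $c$-cliques (e.g.\ $K_n$, or a disjoint union of cliques of size $2n^{1/2}$, both of which are $c$-closed for every $c$), whereas the target is $O_c(n^{2-2^{1-c}})$. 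The paper's Appendix~\ref{app:kk} bound of $O(n^{2-2^{1-c}})$ on the number of $c$-cliques holds only for $K_k$-free $c$-closed graphs, which you cannot assume here. Your fallback for maximal cliques of size less than $c$ is similarly insufficient: Theorem~\ref{thm:init} gives $3^{(c-1)/3}n^2$, which exceeds $n^{2-2^{1-c}}$, and the edge-based count $\sum_{(u,v)\in E} F(|N(u)\cap N(v)|,c-2)$ is uncontrolled because adjacent vertices in a $c$-closed graph may have arbitrarily many common neighbors.

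What the paper does instead in the dense case is not a one-shot global count but a second recursion: take a vertex $v$ of maximum degree $\Delta > n^{1/2}$, bound the maximal cliques meeting $N(v)\cup\{v\}$, then delete all of $N(v)\cup\{v\}$ and recurse on the remaining $n-\Delta$ vertices (so there are at most $n/\Delta < n^{1/2}$ such deletion rounds). The count within one round groups each maximal clique $K$ avoiding $v$ by the set $S = K\cap N(v)$; the rest of $K$ lies in $N(S)\cap N_2(v)$, which is $(c-1)$-closed, and the crucial leverage comes from $c$-closure applied to $v$ and its distance-$2$ vertices: each $u\in N_2(v)$ satisfies $|N(u)\cap N(v)|<c$ and therefore appears in $N(S)$ for at most $2^{c-1}$ sets $S$. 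This caps $\sum_S |N(S)\cap N_2(v)|$ by $\min\{\Delta^2,n\}2^{c-1}$, and a convexity (Jensen) argument converts that into a bound of $\min\{\Delta, n/\Delta\}2^{c-1}F_0(\Delta,c-1)$ on the number of such cliques per round. Some mechanism of this kind---extracting a per-round gain from the fact that non-neighbors of $v$ see fewer than $c$ vertices of $N(v)$---is what your proposal is missing; without it the dense case does not close.
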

The structure of the proof is similar to that of Theorem \ref{thm:init}.
We get an improved bound by a separate analysis depending on whether $G$ has a vertex of ``high'' degree. This idea appears in the result of Eschen et al.~\cite{eschen2011graphs}, who prove the result for the $c=2$ case. 

We will require the following simple lemma. 
\begin{lemma}\label{lem:nbhd}
For any $v$, $G[N(v)]$ is a $(c-1)$-closed graph.
\end{lemma}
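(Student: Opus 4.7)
The plan is to prove the lemma by a direct application of the definition of $c$-closedness, exploiting the observation that $v$ itself contributes exactly one extra common neighbor in $G$ that is not visible in $G[N(v)]$. First I would take two arbitrary distinct non-adjacent vertices $x, y \in N(v)$ and suppose for contradiction that they have at least $c-1$ common neighbors in the induced subgraph $G[N(v)]$. By definition of the induced subgraph, each such common neighbor $w$ satisfies $w \in N(v)$ and $w$ is adjacent in $G$ to both $x$ and $y$.

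Next I would note that $v$ is adjacent in $G$ to both $x$ and $y$ (because $x,y \in N(v)$), so $v$ is also a common neighbor of $x$ and $y$ in $G$. Since the graph is simple, $v \notin N(v)$, so $v$ was not among the $c-1$ common neighbors counted in $G[N(v)]$. Adding $v$ to that set, the pair $x,y$ has at least $c$ common neighbors in $G$. The $c$-closedness of $G$ then forces $(x,y) \in E(G)$, hence $(x,y) \in E(G[N(v)])$, contradicting the assumption that $x$ and $y$ were non-adjacent in $G[N(v)]$. Taking the contrapositive yields the claim that $G[N(v)]$ is $(c-1)$-closed.

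There is essentially no technical obstacle here; the argument is a one-line counting observation about how restricting to $N(v)$ loses exactly the single common neighbor $v$. The only thing worth being careful about is the degenerate case $c=1$, where the conclusion becomes that $G[N(v)]$ is $0$-closed, i.e., any two distinct vertices are adjacent; but this is immediate since a $1$-closed graph is a disjoint union of cliques, so $N(v)$ induces a clique. The general argument above handles $c \geq 2$ uniformly.
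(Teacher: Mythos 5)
Your proof is correct and is essentially identical to the paper's: both arguments observe that $v$ is an extra common neighbor of any pair $x,y \in N(v)$, so $c-1$ common neighbors inside $G[N(v)]$ become $c$ in $G$, forcing adjacency by $c$-closedness. The only cosmetic difference is that you phrase it as a contradiction while the paper argues directly; your separate remark about $c=1$ is unnecessary since the general argument already covers it.
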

\begin{proof} Consider pair $x,y \in N(v)$ with $c-1$ common neighbors in $G[N(v)]$. Since vertex $v \notin N(v)$ is also a common neighbor of $x$ and $y$, $x$ and $y$ have $c$ common neighbors in $G$. Thus, $(x,y)$ is an edge.
\end{proof}
\begin{proof}[Proof of Theorem~\ref{thm:imp}]
Let $G=(V,E)$ be a $c$-closed graph on $n$ vertices with $F(n,c)$ maximal cliques. 
Let $\Delta(G)$ be the maximum degree of $G$.
\paragraph*{Case 1: \normalfont $\Delta(G) \leq n^{1/2}$.}
By Lemma~\ref{lem:nbhd} for all $v\in V(G)$, $G[N(v)]$ is $(c-1)$-closed. Then, since the number of maximal cliques containing $v$ is exactly the number of maximal cliques in $G[N(v)]$, we have 
\[ F(n,c)\leq nF(\Delta(G),c-1).\]
\paragraph*{Case 2: \normalfont $\Delta(G) > n^{1/2}$.}
Let $v\in V$ be a vertex of degree $\Delta(G)$. We will count the maximal cliques containing at least one vertex in $N(v)\cup\{v\}$, delete $N(v)\cup\{v\}$, and recurse. 

Since the number of maximal cliques containing $v$ is exactly the number of maximal cliques in $G[N(v)]$, $v$ is in at most $F(\Delta(G), c-1)$ maximal cliques. 
It remains to bound the number the maximal cliques that contain some vertex in $N(v)$ but not $v$ itself. Such a clique must contain some vertex in $u\in V\sm(N(v)\cup\{v\})$ (otherwise, it would not be maximal).
Let $\mathcal{K}$ be the set of such cliques. We will bound $|\mathcal{K}|$ by grouping the maximal cliques $K$ in $\mathcal{K}$ based on which vertices of $N(v)$ are in $K$. For nonempty $S \subseteq N(v)$, let $N(S)$ denote $\bigcap_{u\in S}N(u)$. Also, let $N_2(v)$ denote the set of vertices of distance \emph{exactly} 2 from $v$. Let us bound the number of cliques $K \in \mathcal{K}$ such that $K \cap N(v) = S$. The other vertices in $K$ must be in $N(S)\cap N_2(v)$. By Lemma~\ref{lem:nbhd}, $G[N(S)\cap N_2(v)]$ is a $(c-1)$-closed graph. The number of cliques $K \in \mathcal{K}$ such that $K \cap N(v) = S$ is at most $F(|N(S)\cap N_2(v)|,c-1).$  Summing over all subsets $S \subseteq N(v)$, we have
\begin{align}
|\mathcal{K}|\leq \sum_{S\subseteq N(v)} F(|N(S)\cap N_2(v)|,c-1).\label{C}
\end{align} 

For all $u\in N_2(v)$, since $u$ and $v$ are not adjacent, $|N(u)\cap N(v)|<c$ (because $G$ is $c$-closed). Each vertex in $N_2(v)$ can be in $N(S)$ for only $2^{c-1}$ sets $S\subseteq N(v)$, implying
\begin{align}
\sum_{S\subseteq N(v)} |N(S)\cap N_2(v)|&\leq |N_2(v)|2^{c-1} \leq \min\{(\Delta(G))^2,n\}2^{c-1}.\label{S}
\end{align}
We want to determine for all $S$ the value of $|N(S)\cap N_2(v)|$ that maximizes the upper bound for $|\mathcal{K}|$ in Inequality~(\ref{C}) subject to the constraint in Inequality~(\ref{S}). Later, we prove our bound on $F$ (from the theorem statement) by induction on $n$ and $c$. In fact, we show that $F(n,c)$ is bounded by 
\[ F_0(n,c)=4^{(c+4)(c-1)/2}n^{2-2^{1-c}},\]
the desired upper bound for $F(n,c)$ that we are trying to prove by induction. Since $F_0(n,c)$  is convex in $n$, by the inductive hypothesis we can apply Jenson's inequality on Inequality~(\ref{C}). Jenson's inequality implies that the upper bound on $|\mathcal{K}|$ is maximized by setting $|N(S)\cap N_2(v)|$ to be as large as possible (note that it cannot exceed $\Delta(G)$) for as many $S$ as possible until the bound in Inequality~(\ref{S}) is met and setting the rest to be 0. 
By Inequality~(\ref{S}), the number of non-zero terms $|N(S)\cap N_2(v)|$ we sum over is at most $\Delta(G)^{-1}\min\{(\Delta(G))^2,n\}2^{c-1}\leq \min\{\Delta(G),\frac{n}{\Delta(G)}\} 2^{c-1}$. Thus, we have the following continuation of Inequality~(\ref{C}).
 \begin{align}
|\mathcal{K}|&\leq\sum_{S\subseteq N(v)} F(|N(S)\cap N_2(v)|,c-1)
\leq \min\{\Delta(G),\frac{n}{\Delta(G)}\} 2^{c-1} F_0(\Delta(G),c-1)\label{E}.
 \end{align}

Recall $|\mathcal{K}|$ is the number of maximal cliques that contain some vertex in $N(v)$ but not $v$ itself, so we combine Inequality~(\ref{E}) with the observation (from the beginning of case 2) that $v$ is in at most $F(\Delta(G), c-1)$ maximal cliques to conclude that the number of maximal cliques containing at least one vertex in $N(v)\cup\{v\}$ is at most \[F_0(\Delta(G), c-1)(1+\min\{\Delta(G),\frac{n}{\Delta(G)}\} 2^{c-1})\leq F_0(\Delta(G), c-1)\min\{\Delta(G),\frac{n}{\Delta(G)}\} 2^c.\] Then, recursing on $G\setminus(N(v)\cup\{v\})$, we have:
 \[F(n,c) < F_0(\Delta(G), c-1)\min\{\Delta(G),\frac{n}{\Delta(G)}\} 2^c+F(n-\Delta(G),c).\]

Combining the low and high degree bounds on $F(n,c)$, we get the following recurrence. 
\begin{align*}
&F(n,1)=n, \ \ \ F(1,c)=1\\
&F(n,c)\leq \begin{cases} 
      nF(\Delta(G),c-1) & \Delta(G)\leq n^{1/2} \\
      F_0(\Delta(G), c-1)\frac{n}{\Delta(G)} 2^c+F(n-\Delta(G),c) & \Delta(G)> n^{1/2} \\
   \end{cases}
\end{align*}
The remainder of the proof shows inductively that the recurrence implies the desired bound $F(n, c)  \leq F_0(n,c) \leq n^{2- 2^{1-c}} 2^{(c+4)(c-1)/2}$. The desired bound holds in the two base cases. For the inductive case, we need to show that
\[n^{2- 2^{1-c}} 2^{(c+4)(c-1)/2}\geq \begin{cases} 
      n\Delta(G)^{2- 2^{2-c}} 2^{(c+3)(c-2)/2} & \Delta(G)\leq n^{1/2} \\
      \Delta(G)^{2- 2^{2-c}} 2^{(c+3)(c-2)/2}\frac{n}{\Delta(G)} 2^c\\\hspace{5mm}+(n-\Delta(G))^{2- 2^{1-c}} 2^{(c+4)(c-1)/2} & \Delta(G)> n^{1/2}.\end{cases}\]
      
In the $\Delta(G)\leq n^{1/2}$ case, the expression is maximized when $\Delta(G)=n^{1/2}$. Thus, 
\begin{align*}
n\Delta(G)^{2- 2^{2-c}} 2^{(c+3)(c-3)/2}&\leq n^{1+\frac{1}{2}(2- 2^{2-c})} 2^{(c+3)(c-2)/2}<n^{2- 2^{1-c}} 2^{(c+4)(c-1)/2},
\end{align*}
as desired.

For the $\Delta(G)> n^{1/2}$ case, the second term of the expression can be written as \\$(n(1-\frac{\Delta(G)}{n}))^{2- 2^{1-c}} 2^{(c+4)(c-1)/2}$. We use the following claim.
\begin{claim}
$(1-x)^k \leq 1-\frac{xk}{2}$ for any $0<x\leq 1/2$ and $1 \leq k \leq 2$
\end{claim}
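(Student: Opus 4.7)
The plan is to fix $x \in (0, 1/2]$ and view the claim as a statement about $k$ alone: define $h(k) := (1-x)^k + \tfrac{xk}{2} - 1$, so the claim is equivalent to $h(k) \leq 0$ for $k \in [1,2]$. The approach is a convexity/endpoint argument — verify the inequality at $k=1$ and $k=2$, and then use convexity in $k$ to interpolate across the interval. This is natural because $(1-x)^k$ is an exponential function of $k$, which is always convex, while the extra term $\tfrac{xk}{2} - 1$ is linear in $k$ and therefore preserves convexity.

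First I would handle the two endpoints by direct computation. At $k=1$ one gets $h(1) = (1-x) - 1 + x/2 = -x/2 \leq 0$, and at $k=2$ one gets $h(2) = (1-x)^2 - 1 + x = x(x-1) \leq 0$, using $x \leq 1$. Next I would establish convexity of $h$ in $k$ by differentiating twice: $h''(k) = (1-x)^k \bigl(\ln(1-x)\bigr)^2$, which is non-negative since $(1-x)^k > 0$ and $\bigl(\ln(1-x)\bigr)^2 \geq 0$. So $h$ is convex on $[1,2]$.

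With convexity in hand, any $k \in [1,2]$ can be written as $\lambda \cdot 1 + (1-\lambda) \cdot 2$ for some $\lambda \in [0,1]$, and then $h(k) \leq \lambda h(1) + (1-\lambda) h(2) \leq 0$, which is the desired inequality. I do not expect a real obstacle here: the argument is a two-endpoint check plus a one-line second-derivative computation. Indeed, the hypothesis $x \leq 1/2$ is not actually used — the endpoint values $h(1)$ and $h(2)$ are non-positive for the larger range $x \in (0,1]$ — so the claim holds on that larger range as well, though only the stated range is needed by the theorem.
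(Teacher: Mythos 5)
Your proof is correct, but it takes a genuinely different route from the paper. The paper sandwiches the quantity through the exponential function: it asserts that for $y\in(0,1)$ one has $(1-y)\leq e^{-y}$ and $e^{-y}\leq 1-y/2$, and then applies this with $y=xk$ to get $(1-x)^k\leq e^{-xk}\leq 1-xk/2$ (note that this is where the hypotheses $x\leq 1/2$ and $k\leq 2$ enter, to ensure $xk\leq 1$). You instead fix $x$, set $h(k)=(1-x)^k+\tfrac{xk}{2}-1$, verify $h(1)=-x/2\leq 0$ and $h(2)=x(x-1)\leq 0$, and use convexity of $h$ in $k$ (via $h''(k)=(1-x)^k(\ln(1-x))^2\geq 0$) to conclude $h\leq 0$ on all of $[1,2]$. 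Both arguments are short and sound; the paper's is a one-line chain but leans on the unproved (though standard) estimate $e^{-y}\leq 1-y/2$ on $(0,1)$, whereas yours is fully self-contained, needing only the endpoint computations and the definition of convexity. Your approach also shows, as you observe, that the claim extends to all $x\in(0,1]$, a range on which the paper's argument as written does not directly apply since it needs $xk<1$; this extra generality is not needed for Theorem~\ref{thm:imp}, but it is a correct and mildly stronger statement. The only cosmetic caveat is that at $x=1$ the expression $\ln(1-x)$ is undefined, so that boundary case should be handled separately (there $h(k)=k/2-1\leq 0$ directly); for the stated range $x\leq 1/2$ this issue does not arise.
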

\begin{proof} For any $y \in (0,1)$, $(1-y) \leq e^{-y}$ and $e^{-y} \leq 1-y/2$. Thus, $(1-x)^k \leq e^{-xk} \leq 1-xk/2$.

\end{proof}
Applying the claim with $x=\frac{\Delta(G)}{n}$ and $k=2-2^{1-c}$, it suffices to show that \[n^{2- 2^{1-c}} 2^{(c+4)(c-1)/2}\geq \Delta(G)^{2- 2^{2-c}} 2^{(c+3)(c-2)/2}\frac{n}{\Delta(G)} 2^c+n^{2- 2^{1-c}}\left(1-\frac{\Delta(G)(2- 2^{1-c})}{2n}\right) 2^{(c+4)(c-1)/2}.\]
or equivalently that
\[\Delta(G)^{2- 2^{2-c}} 2^{(c+3)(c-2)/2}\frac{n}{\Delta(G)} 2^c-n^{2- 2^{1-c}}\frac{\Delta(G)(2- 2^{1-c})}{2n} 2^{(c+4)(c-1)/2}\leq 0.\]
Simplifying the left-hand side of the above inequality and using the fact that $c \geq 1$:
\begin{align*}
&\Delta(G)^{2- 2^{2-c}} 2^{(c+3)(c-2)/2}\frac{n}{\Delta(G)} 2^c-n^{2- 2^{1-c}}\frac{\Delta(G)(2- 2^{1-c})}{2n}) 2^{(c+4)(c-1)/2}\\
&=n\Delta(G)^{1- 2^{2-c}} 2^{(c^2+c-6)/2}-n^{1- 2^{1-c}}\Delta(G)(1- 2^{-c}) 2^{(c^2+3c-4)/2}\\
&\leq 2^{(c^2+c-6)/2}(n\Delta(G)^{1- 2^{2-c}}-n^{1- 2^{1-c}}\Delta(G)(1-2^{-c})2^{c+1}) \\
& = 2^{(c^2+c-6)/2}(n\Delta(G)^{1- 2^{2-c}}-n^{1- 2^{1-c}}\Delta(G)(2^{c+1}-2)) \\
& \leq 2^{(c^2+c-6)/2}(n\Delta(G)^{1- 2^{2-c}}-n^{1- 2^{1-c}}\Delta(G)) \\
& = 2^{(c^2+c-6)/2}n^{1-2^{1-c}}\Delta(G)^{1- 2^{2-c}}(n^{ 2^{1-c}}-\Delta(G)^{2^{2-c}})  \leq 0.
\end{align*}
The last inequality holds because  $\Delta\geq n^{1/2}$.
\end{proof}

Like the proof of the initial bound (Theorem \ref{thm:init}), the proof of the improved bound (Theorem \ref{thm:imp}) also suggests an algorithm for generating the set of maximal cliques involving the preprocessing step of listing the set of all wedges in the graph. However, this algorithm is not asymptotically faster than Algorithm~\ref{alg} since its dependence on $n$ still includes $p(n,c)$ and we omit it.

\section{Lower bound}\label{sec:lb}
\begin{theorem}[restatement of Theorem~\ref{intro:lb}]\label{thm:lb}
For any positive integer $c$, we can construct graphs which are $c$-closed and with $\Omega(c^{-3/2}2^{c/2}n^{3/2})$ maximal cliques. 
\end{theorem}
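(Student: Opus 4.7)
The plan is to combine the classical projective plane construction (responsible for the $n^{3/2}$ factor) with a cocktail-party gadget that boosts the clique count by $2^{c/2}$ per incidence. Fix a prime power $q=\Theta(\sqrt{n/c})$ (which exists by the density of prime powers) and take a projective plane of order $q$ with point set $P$ and line set $L$, so that $|P|=|L|=q^2+q+1$, each point lies on $q+1$ lines, and any two points are on a unique common line. Set $k=\lfloor c/2\rfloor$. I would introduce for every point $p$ a set $A_p=\{a_{p,1},\dots,a_{p,k}\}$ of $k$ new vertices, and for every line $\ell$ a set $B_\ell=\{b_{\ell,1},\dots,b_{\ell,k}\}$, making each $A_p$ and each $B_\ell$ a clique. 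For each incidence $p\in\ell$ I would add every edge between $A_p$ and $B_\ell$ except the ``matching'' $\{a_{p,i}b_{\ell,i}:1\le i\le k\}$; no other edges are added, and isolated vertices pad the total to exactly $n$. The induced subgraph on $A_p\cup B_\ell$ is then precisely the cocktail-party graph $K_{k\times 2}$ (complete multipartite with $k$ parts of size $2$), whose $2^k$ maximum cliques of size $k$ correspond to choosing one vertex from each matched pair.

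Next I would verify $c$-closedness by inspecting each kind of non-adjacent pair. Pairs $(a_{p,i},a_{p',j})$ with $p\neq p'$ share common neighbors only inside $B_{\ell^*}$, where $\ell^*$ is the unique line through $p$ and $p'$, yielding at most $k-1<c$ common neighbors; the case $(b_{\ell,i},b_{\ell',j})$ with $\ell\neq\ell'$ is symmetric. The matching non-edges $(a_{p,i},b_{\ell,i})$ with $p\in\ell$ have common neighborhood $(A_p\setminus\{a_{p,i}\})\cup(B_\ell\setminus\{b_{\ell,i}\})$, of size $2(k-1)\le c-2<c$. Finally, pairs $(a_{p,i},b_{\ell,j})$ with $p\notin\ell$ have no common neighbors at all, since any common neighbor would have to sit in some $A_{p'}$ with $p'=p$ or some $B_{\ell'}$ with $\ell'=\ell$, both impossible. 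Hence every non-adjacent pair has fewer than $c$ common neighbors.

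For the count, observe that any clique of $G$ sits inside a single $A_p$, a single $B_\ell$, or a single gadget $A_p\cup B_\ell$ with $p\in\ell$: distinct $A_p$'s are mutually disconnected, as are distinct $B_\ell$'s, and cross edges live only at incidences. Inside one gadget, each of the $2^k$ one-from-each-pair selections is a maximum clique of $K_{k\times 2}$, and each extends to a maximal clique of $G$ because any candidate additional vertex either violates a matching non-edge or sits in a disjoint $A_{p'}$ or $B_{\ell'}$. Among these $2^k$ selections the two ``pure'' ones recover $A_p$ and $B_\ell$ themselves; these are shared across all $q+1$ gadgets through $p$ and through $\ell$, respectively, while the remaining $2^k-2$ ``mixed'' cliques are uniquely pinned to their incidence by containing vertices of both sides. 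Summing over incidences therefore gives at least
\[
(q+1)(q^2+q+1)(2^k-2)\;=\;\Omega\bigl(q^3\cdot 2^{c/2}\bigr)\;=\;\Omega\bigl(c^{-3/2}\,2^{c/2}\,n^{3/2}\bigr)
\]
maximal cliques, using $q=\Theta(\sqrt{n/c})$.

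The routine work is in the $c$-closedness verification; the real care is needed in the clique count, where one must avoid double-counting the mixed cliques across incidences and avoid over-counting the shared $A_p,B_\ell$. Both are resolved by the observation that any mixed maximal clique contains a unique pair of nonempty subsets $S\subseteq A_p$, $T\subseteq B_\ell$ with $p\in\ell$, which pins its incidence, while the shared cliques contribute only $2(q^2+q+1)$, a lower-order term. Small values $c\in\{2,3\}$ where $k\le 1$ make the gadget degenerate are handled directly by the classical $2$-closed projective-plane incidence graph; for odd $c\ge 5$ one applies the construction with parameter $c-1$ and uses the fact that a $(c-1)$-closed graph is automatically $c$-closed, losing only a constant factor.
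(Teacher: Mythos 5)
Your proof is correct and is essentially the paper's construction: blow up each vertex of a dense graph with no triangles or $C_4$'s into a clique of size $\lfloor c/2\rfloor$ and join across each edge by a complete bipartite graph minus a perfect matching, so that each edge of the host graph yields $\Theta(2^{c/2})$ maximal cliques. The only cosmetic difference is that you instantiate the host graph as the point--line incidence graph of a projective plane (girth $6$), whereas the paper takes an extremal girth-$5$ graph on $2n/c$ vertices with $\Omega((n/c)^{3/2})$ edges; both supply exactly the properties the argument needs.
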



\paragraph*{Construction.}
We suppose that $c$ is even and $n$ is a multiple of $c$. We can do this with only an absolute constant factor loss in the bound, which is allowable. We start with a graph $H$ on $v = 2n/c$ vertices with girth $5$ and the maximum possible number of edges, which is $\Omega(v^{3/2})$~\cite{girth5}. 

We construct our $c$-closed graph $G$ on $n$ vertices from $H$ in the following way.
For each vertex $x \in V(H)$, we replace it with a vertex set $U_x$ with $c/2$ vertices. Therefore, there are $|V(H)| \cdot c/2 = n$ vertices in $G$. The adjacency relation of $G$ is as follows. 
\begin{itemize}
\item Add all edges within each $U_x$ so that $U_x$ is a clique for all $x\in V(H)$.
\item For any edge $(x,y)$ of $H$, we place edges between the vertex sets $U_x$ and $U_y$ such that the bipartite graph between $U_x$ and $U_y$ consists of a complete bipartite graph minus a perfect matching. 
\item For any distinct and nonadjacent $x, y \in V(H)$, there are no edges between $U_x$ and $U_y$. 
\end{itemize}

Theorem~\ref{thm:lb} follows from the next two claims. 

\begin{claim}
The graph $G$ constructed is $c$-closed. 
\end{claim}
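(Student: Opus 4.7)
The plan is to take an arbitrary non-adjacent pair $u,w \in V(G)$ and show that $|N(u)\cap N(w)| < c$ by casework on the containing blocks $U_x, U_y$ with $u \in U_x$, $w \in U_y$. Since each $U_x$ is a clique, non-adjacency forces $x \neq y$, so there are exactly two cases: (a) $(x,y) \in E(H)$, in which case $u$ and $w$ must be the matched pair from the complete-bipartite-minus-matching; (b) $(x,y) \notin E(H)$.

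The key structural facts to invoke are the girth-$5$ hypothesis on $H$, which gives (i) no triangle, so $x$ and $y$ share no common neighbor in $H$ when $(x,y) \in E(H)$, and (ii) no $4$-cycle, so when $(x,y) \notin E(H)$ there is at most one $z \in V(H)$ adjacent to both $x$ and $y$. In case (a), the common neighbors of $u,w$ split as those in $U_x$, those in $U_y$, and those in $U_z$ for $z$ a common $H$-neighbor of $x,y$; the first two contribute $c/2 - 1$ each (since $u$ is adjacent to all of $U_x \setminus \{u\}$ and $w$ is adjacent to all of $U_x \setminus \{u\}$ because $u$ is $w$'s unique non-neighbor in $U_x$, and symmetrically for $U_y$), while the third contributes $0$ by (i). This gives $|N(u)\cap N(w)| \leq c-2 < c$. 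In case (b), $u$ has no neighbors in $U_y$ and $w$ has no neighbors in $U_x$, so all common neighbors lie in $U_z$ for the unique (if any) common $H$-neighbor $z$ given by (ii); since $u$ and $w$ each have at most $c/2 - 1$ neighbors in $U_z$, we get $|N(u) \cap N(w)| \leq c/2 - 1 < c$.

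The main obstacle is simply bookkeeping: making sure the cases are exhaustive and that the counts in each block $U_x$, $U_y$, $U_z$ are tallied correctly, especially in case (a) where the contributions from both endpoint blocks combine. The arguments in each block reduce to reading off the local structure of the construction (clique inside each $U_x$, complete bipartite minus matching across each $H$-edge, no edges across $H$-non-edges), and the girth condition then controls the number of third blocks $U_z$ that can contribute. No further estimates are needed beyond the elementary inequality $2(c/2-1) = c-2 < c$, so once the casework is laid out correctly the claim follows directly.
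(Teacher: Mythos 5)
Your proposal is correct and follows essentially the same argument as the paper: the same two-case split on whether $(x,y)$ is an edge of $H$, the same use of the girth-$5$ condition to exclude triangles and $4$-cycles, and the same counts of $c-2$ and $c/2-1$ common neighbors in the respective cases.
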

\begin{proof}

It suffices to check that for any two non-adjacent vertices in $G$, they have at most $c-1$ common neighbors.
By the construction, there are only two types of non-adjacent vertices:

\textbf{Case 1}: The non-adjacent pair $u,v \in V(G)$ are such that $u \in U_x, v\in U_y$ and $x, y \in V(H)$ are disitinct and non-adjacent in $H$.

In this case, there are no edges between $U_x, U_y$, and the common neighbors of $u, v$ are such that there is a vertex $z \in V(H)$ such that $(x,z), (y,z)$ are both edges in $H$. Since $H$ has girth 5, there is at most one such $z \in H$ given $x,y$, as otherwise $H$ would contain a $C_4$. Vertex $u \in V(G)$ is adjacent to exactly $|U_z| - 1 = (c/2)-1$ vertices in $U_z$, so $u,v$ can have at most $(c/2)-1$ common neighbors.

\textbf{Case 2}: The non-adjacent pair $u,v \in V(G)$ is such that $u \in U_x, v\in U_y$ and $x, y \in V(H)$ are adjacent in $H$.

In this case, $u$ and $v$ are adjacent to all other vertices in $U_x \cup U_y$, so they have $c-2$ common neighbors in $U_x \cup U_y$. Suppose for contradiction that $u,v$ have some other common neighbor $w$ and $w \in U_z$ for some $z \neq x, y$. This implies that $(w,x), (w,y)$ are both edges in $H$. However, $(x,y)$ is already an edge in $H$ by the assumption of this case. This implies that $H$ contains a triangle, which contradicts the fact that $H$ has girth 5. 

Combining both cases, we know that $G$ is $c$-closed. 
\end{proof}

\begin{claim}
There are $\Omega(c^{-3/2}2^{c/2}n^{3/2})$ maximal cliques in $G$.
\end{claim}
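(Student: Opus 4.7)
The plan is to count, for each edge $(x,y)$ of $H$, the maximal cliques of $G$ supported on the ``gadget'' $G[U_x \cup U_y]$, show that enough of them are maximal in $G$ itself, and verify that these contributions do not overlap across distinct edges.

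First I would characterize the maximal cliques of $G[U_x \cup U_y]$ for a fixed edge $(x,y) \in E(H)$. Here $U_x, U_y$ are cliques and the bipartite graph between them is $K_{c/2,c/2}$ minus a perfect matching $M=\{(u_i,v_i):1 \le i \le c/2\}$. Any clique $K$ in $G[U_x \cup U_y]$ contains at most one vertex of each matching pair, since $u_i, v_i$ are non-adjacent. Conversely, if $K$ omits both $u_i$ and $v_i$, then $u_i$ is adjacent to every vertex of $K$ (it is adjacent to all of $U_x\setminus\{u_i\}$ and all of $U_y\setminus\{v_i\}$), so $K\cup\{u_i\}$ is still a clique, contradicting maximality. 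Thus the maximal cliques of $G[U_x\cup U_y]$ are in bijection with selections of one endpoint from each of the $c/2$ matching pairs, yielding exactly $2^{c/2}$ maximal cliques per edge of $H$.

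Next I would promote these to maximal cliques of $G$. Any extension would require some $w\in U_z$ with $z\neq x,y$ adjacent to all vertices of $K$. If $K$ meets both $U_x$ and $U_y$, then $z$ must be adjacent to both $x$ and $y$ in $H$, giving a triangle $xyz$ and contradicting the girth-$5$ assumption on $H$. In the two exceptional cases $K=U_x$ or $K=U_y$, one checks directly that any candidate $w$ in a neighboring gadget has a matched non-neighbor inside that gadget, so no extension exists. Hence all $2^{c/2}$ cliques are maximal in $G$.

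Finally I would avoid double-counting across edges. A maximal clique of $G[U_x\cup U_y]$ meeting both $U_x$ and $U_y$ determines the unordered pair $\{x,y\}$ as precisely the pair of $H$-vertices whose gadgets it intersects, so such a clique cannot arise from any other edge. Only two of the $2^{c/2}$ choices per edge fail to meet both sides, namely $K=U_x$ and $K=U_y$. Each edge therefore contributes at least $2^{c/2}-2$ distinct maximal cliques to $G$. For $c\ge 4$ we have $2^{c/2}-2 \ge 2^{c/2-1}$, so the total count is at least
\[
|E(H)|\cdot 2^{c/2-1} \;=\; \Omega\!\left(v^{3/2}\right)\cdot 2^{c/2} \;=\; \Omega\!\left((n/c)^{3/2}\cdot 2^{c/2}\right) \;=\; \Omega\!\left(c^{-3/2} 2^{c/2} n^{3/2}\right),
\]
using $v = 2n/c$ and the girth-$5$ extremal bound $|E(H)| = \Omega(v^{3/2})$. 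For the finitely many small values of $c$, the bound reduces to $\Omega(n^{3/2})$ and is absorbed into the implicit constant, matching the projective-plane construction of Eschen et al.\ cited earlier. The main obstacle is the bijective analysis in the first step (making sure that \emph{every} maximal clique of the gadget really does pick exactly one vertex per matching pair); once that is nailed down, the rest is bookkeeping.
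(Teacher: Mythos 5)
Your proposal is correct and follows essentially the same route as the paper: count the $2^{c/2}$ one-endpoint-per-matching-pair selections in each gadget $G[U_x\cup U_y]$ and multiply by $|E(H)|=\Omega((n/c)^{3/2})$. You are in fact somewhat more careful than the paper's terse argument, since you explicitly verify maximality in all of $G$ via the girth-$5$ condition and discount the cliques $U_x$, $U_y$ that would otherwise be counted once per incident edge of $H$; neither refinement changes the approach or the asymptotic bound.
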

\begin{proof}
For any edge $(x,y)$ of $H$, picking one endpoint of each non-edge in $U_x \cup U_y$ gives a maximal clique.  Thus for each edge $(x,y)$ of $H$, there are exactly $2^{|U_x|} = 2^{c/2}$ maximal cliques. 

There are $\Omega(|V(H)|^{3/2}) = \Omega((2n/c)^{3/2})$ edges in $H$. As each of the maximal cliques obtained are distinct, we obtain $\Omega(2^{c/2} \cdot (2n/c)^{3/2})=\Omega(c^{-3/2}2^{c/2}n^{3/2})$ maximal cliques. 
\end{proof}

\section{Open problems and future directions}
\paragraph*{Direct improvement of our results}
\begin{itemize}
\item Determine the exact dependence on $n$ for the maximum possible number of of maximal cliques in a $c$-closed graph. We have proven (up to constant dependence on $c$) that this number is between $n^{3/2}$ and $n^{2-2^{1-c}}$.
\item Find a faster algorithm for listing the set of all wedges (induced 2-paths) in a $c$-closed graph (this would improve the runtime of Algorithm~\ref{alg}).
\end{itemize}

\paragraph*{Further exploration of c-closed graphs}
\begin{itemize}
\item Study the densest $k$-subgraph problem, a generalization of the clique problem, on $c$-closed graphs. The input to the problem is a graph $G$ and a parameter $k$, and the goal is to to find the subgraph of $G$ on $k$ vertices with the most edges. Unlike the clique problem, densest $k$-subgraph is NP-hard even for 2-closed graphs (more specifically, for graphs of girth 6)~\cite{raman2008short}. For general graphs, the best-known approximation algorithm has approximation ratio roughly $O(n^{1/4})$~\cite{bhaskara2010detecting} and under certain average-case hardness assumptions (concerning the planted clique problem), constant-factor approximation algorithms do not exist~\cite{alon2011inapproximability}.
\item Determine which other NP-hard problems are fixed-parameter tractable with respect to $c$.
\item Determine which problems in P have faster algorithms on $c$-closed graphs.
\end{itemize}

\paragraph*{Other model-free definitions of social networks}
\begin{itemize}
\item Explore other graph classes motivated by the well-established signatures of social networks (described in the introduction): heavy-tailed degree distributions, high triangle density, dense ``communities'', low diameter and the small world property, and triadic closure.
\item Determine other model-free definitions of social networks, for example, those motivated by 4-vertex subgraph frequencies. Ugander et al.~\cite{UgBaKl13} and subsequently Seshadhri ~\cite{JhSePi15} computed 4-vertex subgraph counts in a variety of social networks and the frequencies observed are far different than what one would expect from a random graph. In particular, social networks tend to have far fewer induced 4-cycles than random graphs.
\end{itemize}

\subparagraph*{Acknowledgements.}

We would like to thank Christina Gilbert for writing the code to calculate the $c$-closure and weak $c$-closure of networks in the SNAP data sets.

We would also like to thank Virginia Vassilevska Williams and Josh Alman for useful conversations about turning our bound into an algorithm. 

\bibliography{references}

\pagebreak
\appendix
\section{Appendix: Algorithm to generate all maximal cliques}\label{app:alg}
In this section we prove Theorem~\ref{thm:alg}, restated below. Recall that $p(n,c)$ denotes the time to list all wedges (induced 2-paths) in a $c$-closed graph on $n$ vertices.

\begin{theorem}[restatement of Theorem~\ref{thm:alg}]\label{app:algthm}
A superset of the maximal cliques in any $c$-closed graph can be generated in time $O(p(n,c)+3^{c/3}n^2)$. The exact set of all maximal cliques in any $c$-closed graph can be generated in time $O(p(n,c)+3^{c/3}2^ccn^2)$.
\end{theorem}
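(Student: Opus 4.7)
The plan is to make the proof of Theorem~\ref{thm:init} algorithmic. Recall that the proof picks a vertex $v$, partitions the maximal cliques of $G$ into three types, and recurses on $G\setminus\{v\}$: types~1 and~2 come from maximal cliques of $G\setminus\{v\}$ (possibly extended by $v$), while each type~3 clique has the form $\{v\}\cup K'$ where $K'$ is a maximal clique of $G[N(u)\cap N(v)]$ for some $u\notin N(v)\cup\{v\}$. The algorithm will mirror exactly this decomposition.

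The first ingredient is a preprocessing step that lists every wedge of $G$ in total time $O(p(n,c))$. From the resulting list we can read off, for each non-adjacent pair $\{u,v\}$, the set $N(u)\cap N(v)$, which has size at most $c-1$ by $c$-closedness. With this data in hand the recursion becomes cheap: at each of the $n$ recursion levels we pick a remaining vertex $v$, iterate over each remaining non-neighbor $u$ of $v$, and call the Tomita--Tanaka--Takahashi algorithm on the subgraph $G[N(u)\cap N(v)]$ (which has fewer than $c$ vertices), enumerating all of its at most $3^{(c-1)/3}$ maximal cliques in time $O(3^{(c-1)/3})$. Prepending $v$ to each such clique gives the type~3 candidates, while type~1 and type~2 candidates are produced by piping the recursive output upward and appending $v$ whenever $v$ is adjacent to every vertex of the clique. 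Summed over all $n$ recursion levels, the work is $O(3^{c/3}n^2)$ on top of the preprocessing, giving the first bound $O(p(n,c)+3^{c/3}n^2)$.

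The set produced above is a superset of the maximal cliques: it may contain both non-maximal cliques (a candidate from an earlier recursive level can turn out to be extendable by a type~3 vertex from a later level) and duplicates (the same maximal clique can be emitted once per vertex it contains). To extract the exact set, we canonicalize each candidate by sorting its vertices and deduplicate via hashing, and then test each surviving candidate $K$ for genuine maximality in $G$, i.e., decide whether any vertex of $V\setminus K$ is adjacent to every vertex of $K$. Since a type~3 candidate has $|K|\leq c$, the maximality test can be carried out in $O(2^c c)$ time per candidate: enumerate the $\leq 2^{|K|}\leq 2^c$ subsets of $K$ to restrict the search for extenders to the appropriate intersected neighborhoods stored from the wedge preprocessing, verifying each candidate extender in $O(c)$ adjacency queries. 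With $O(3^{c/3}n^2)$ candidates this adds $O(3^{c/3}2^c c n^2)$ total, yielding the second bound.

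The principal obstacle is the maximality test. A naive sweep over all $n-|K|$ vertices of $V\setminus K$ costs $O(cn)$ per candidate and hence $O(3^{c/3}cn^3)$ in total, which would violate the claimed bound. Replacing the factor of $n$ by $2^c$ is exactly what the wedge preprocessing and the subset-enumeration trick buy us, and where the main implementation care is required; the complete data structures, pseudocode, and runtime accounting are deferred to Appendix~\ref{app:alg}.
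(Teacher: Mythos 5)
Your first bound is argued exactly as in the paper: wedge preprocessing in time $p(n,c)$, the recursion mirroring the proof of Theorem~\ref{thm:init}, and the Tomita--Tanaka--Takahashi algorithm run on each $G[N(u)\cap N(v)]$ with fewer than $c$ vertices, giving $O(3^{c/3}n)$ work per level and $O(p(n,c)+3^{c/3}n^2)$ overall. (The paper is slightly more careful about output representation---it returns a forest whose root-to-leaf paths are the cliques, so that ``appending $v$'' to many cliques does not cost time proportional to the total description length of all cliques---but this is an implementation detail consistent with your sketch.)

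The second bound is where there is a genuine gap, and you have in fact flagged it yourself without resolving it. Your plan is to test each candidate $K$ for maximality \emph{in $G$}, i.e., to decide whether some $w\in V\setminus K$ satisfies $K\subseteq N(w)$, in time $O(2^c c)$ per candidate by ``restricting the search for extenders to the appropriate intersected neighborhoods stored from the wedge preprocessing.'' This does not work: the extenders of a clique $K$ are the common neighbors of the vertices of $K$, which are \emph{pairwise adjacent}, and the $c$-closed hypothesis bounds only the common neighborhoods of \emph{non-adjacent} pairs---precisely the sets the wedge preprocessing records. For adjacent $x,y\in K$ the set $N(x)\cap N(y)$ can have size $\Theta(n)$ and is not stored, so enumerating subsets of $K$ gives you no size-$O(2^c)$ candidate pool of extenders, and the naive test costs $\Omega(n)$ per candidate, i.e., $\Omega(3^{c/3}n^3)$ total. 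The paper sidesteps maximality testing in $G$ entirely. It observes that the only spurious candidates are type~3 candidates for the \emph{same} pivot $v$: a clique $K$ maximal in $N(v)\cap N(u)$ but properly contained in a clique maximal in $N(v)\cap N(w)$ for another non-neighbor $w$ of $v$. It therefore inserts all cliques of $\bigcup_u F_u$ into a hash set and, processing them from largest to smallest, checks each candidate's at most $2^{|K|}\le 2^c$ subsets (each of size at most $c$) against the hash set to discard dominated ones; this purely combinatorial subset check is the actual source of the $2^c c$ factor in $O(p(n,c)+3^{c/3}2^c c n^2)$. To complete your proof you would need either this structural observation localizing non-maximality to candidates sharing a pivot, or some other mechanism replacing the global maximality test.
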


Before proving Theorem~\ref{app:algthm}, we give a bound on $p(n,c)$, which follows from a result of G\k{a}sieniec, Kowaluk, and Lingas~\cite{gkasieniec2009faster} about computing \emph{witnesses} of boolean matrix multiplication. If $C$ is the boolean matrix product of $A$ and $B$, a \emph{witness} of entry $C[i,j]$ is an index $l$ such that $A[i,l]=B[l, j]=1$.

\begin{lemma}[Theorem 1 in ~\cite{gkasieniec2009faster}]
If $C$ is the boolean matrix product of two $n\times n$ matrices, we can report all witnesses of all entries of $C$ that have at most k witnesses in expected time $O(n^{2+o(1)}k + k^{(3-\omega-\alpha)/(1-\alpha)}n^\omega  + n^\omega \log n)$ where $\omega$ is the matrix multiplication exponent and $\alpha$ is the supremum of the set of $r\in [0,1]$ such that multiplying an $n \times n^r$ matrix by an $n^r \times n$ takes time $O(n^{2+o(1)})$.
\end{lemma}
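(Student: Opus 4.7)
The plan is to reduce the problem to a combination of rectangular Boolean matrix multiplication and iterated single-witness extraction, following the strategy implicit in the Alon--Naor / Galil--Margalit / Seidel line of work for finding one witness, then extending it to $k$ witnesses via a partition-and-peel scheme whose block size is tuned using $\alpha$.

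First, I would compute the integer product of $A$ and $B$ (treating their $0/1$ entries as integers) in $O(n^{\omega})$ time; the $(i,j)$-entry equals the number of Boolean witnesses of $(i,j)$. This identifies, once and for all, the set $\mathcal{E}_k$ of entries with at most $k$ witnesses, which are the only entries the theorem requires us to resolve completely. Second, I would invoke the standard randomized single-witness subroutine: for each sample size $s \in \{1,2,4,\ldots,n\}$, draw a random $S \subseteq [n]$ of size $s$, and compute the integer matrix $A'_S B_S$ where $A'[i,\ell] = \ell \cdot A[i,\ell]$. Any entry $(i,j)$ whose witness set meets $S$ in exactly one index is resolved by reading off the corresponding entry, so a standard balls-and-bins argument resolves every nonzero entry with high probability in expected time $O(n^{\omega}\log n)$, giving the third term of the bound.

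Third, to list \emph{all} witnesses of every entry in $\mathcal{E}_k$, I would partition the middle index set $[n]$ into $n/s$ blocks of size $s = n^{r}$ for a parameter $r \in [\alpha,1]$ to be chosen. For each block $T$, one rectangular Boolean product of an $n \times s$ by $s \times n$ matrix flags, for every $(i,j)$, whether $T$ contains a witness. For each entry $(i,j) \in \mathcal{E}_k$ and each flagged block $T$, I would peel: run the single-witness subroutine restricted to $T$, zero out the discovered $\ell$ in either $A$ or $B$, and repeat until $T$ is exhausted. Since entries in $\mathcal{E}_k$ have at most $k$ witnesses total, the number of peel invocations summed over $\mathcal{E}_k$ is $O(k n^2/\text{something})$, and merely writing the output costs $O(n^{2+o(1)} k)$, which supplies the first term.

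Fourth, I would balance parameters. Using the standard upper bound $\omega(1,r,1) \leq 2 + \tfrac{r-\alpha}{1-\alpha}(\omega-2)$ for $r \geq \alpha$, each block product costs $n^{\omega(1,r,1)}$, and there are $n^{1-r}$ blocks, so the total rectangular-product cost is $n^{1-r + \omega(1,r,1)}$. Equating this with the need to handle up to $k$ witnesses per entry (so that the effective block size per entry is $s \gtrsim n/k$, i.e.\ $n^{1-r} \asymp k$) and simplifying yields the exponent $(3-\omega-\alpha)/(1-\alpha)$ on $k$ in front of $n^{\omega}$; this produces the middle term of the claimed bound.

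The main obstacle is the parameter optimization in the last step: the exponent $(3-\omega-\alpha)/(1-\alpha)$ emerges only after writing the rectangular-MM cost as a linear interpolation anchored at $\omega(1,\alpha,1)=2$ and $\omega(1,1,1)=\omega$, substituting $r = 1 - \log_n k$, and canceling. A secondary subtlety is that the peeling loop must re-randomize independently at each iteration so that the Las Vegas guarantees of the single-witness routine compose cleanly after witnesses are deleted from $A$; this is standard but must be stated explicitly to justify the ``expected time'' claim.
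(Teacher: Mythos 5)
First, note that the paper does not prove this lemma at all: it is imported verbatim as Theorem~1 of the cited reference of G\k{a}sieniec, Kowaluk, and Lingas, so there is no in-paper argument to compare against. Your reconstruction must therefore be judged on its own terms against the source, and it has a genuine quantitative gap in the third step. You deterministically partition the middle index set into $n^{1-r}=k$ blocks of size $s=n^{r}$ and then ``peel'' each flagged block. But each peel is a \emph{global} rectangular single-witness computation costing $n^{\omega(1,r,1)}$ regardless of how many entries still need a witness in that block, and the number of peels a block $T$ requires equals the \emph{maximum}, over entries, of the number of as-yet-undiscovered witnesses lying in $T$, which can be as large as $k$. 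If the entries of $\mathcal{E}_k$ split into $k$ groups with group $g$ having all $k$ of its witnesses concentrated in block $T_g$, you are forced into $\Theta(k^2)$ peels, giving $k^{2}n^{\omega(1,r,1)} = k^{1+(3-\omega-\alpha)/(1-\alpha)}n^{\omega}$ --- a factor of $k$ over the claimed middle term. Your own accounting line, ``$O(kn^2/\text{something})$,'' is exactly where this breaks: the $kn^2$ total witness count does not amortize against per-block matrix products. (A secondary issue: ``zero out the discovered $\ell$ in either $A$ or $B$'' cannot be done per entry $(i,j)$ without destroying the product for all other entries, so the peeling must be global, which is precisely the regime where the counting fails.)

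The missing idea is randomized isolation rather than deterministic partition-and-peel. One takes $O(k\log n)$ \emph{independent} random subsets $S$ of the middle indices, each index included with probability $1/k$ (so $|S|\approx n/k=n^{r}$), and runs the single-witness routine on each restricted rectangular product. For an entry with at most $k$ witnesses, any fixed witness $\ell$ is the \emph{unique} sampled witness of that entry with probability at least $\tfrac{1}{k}(1-\tfrac 1k)^{k-1}=\Omega(1/k)$, in which case the single-witness routine necessarily returns it; a union bound over the at most $kn^2$ (entry, witness) pairs shows all are recovered after $O(k\log n)$ rounds. This caps the number of rectangular computations at $\tilde O(k)$, and then your step four --- the interpolation $\omega(1,r,1)\le 2+(\omega-2)\tfrac{r-\alpha}{1-\alpha}$ with $r=1-\log_n k$, which you carry out correctly --- does yield $k\cdot n^{\omega(1,1-\log_n k,1)}\le k^{(3-\omega-\alpha)/(1-\alpha)}n^{\omega}$. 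Your first two steps (counting witnesses by an integer product, and the $O(n^{\omega}\log n)$ single-witness subroutine) and the $n^{2+o(1)}k$ output term are fine as stated.
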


Let $A$ be the adjacency matrix of a $c$-closed graph on $n$ vertices and let $C$ be the boolean matrix product $A^2$. Then $C[i,j]=1$ if and only if vertices $i$ and $j$ have at least one common neighbor. Since $G$ is $c$-closed, all $C[i,j]$ for non-adjacent $i,j$ have at most $c-1$ witnesses. Thus, we have the following corollary.

\begin{corollary}
All wedges in a $c$-closed graph can be listed in time $p(n,c) = O(n^{2+o(1)}c + n^\omega c^{(3-\omega-\alpha)/(1-\alpha)} + n^\omega \log n)$.
\end{corollary}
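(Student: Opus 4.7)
The proof should be a short, direct application of the G\k{a}sieniec--Kowaluk--Lingas witness lemma. My plan is to set $A$ to be the adjacency matrix of the $c$-closed graph~$G$ and form the boolean product $C = A \cdot A$ (over $\{0,1\}$). Then $C[i,j] = 1$ exactly when vertices $i$ and $j$ have at least one common neighbor, and the set of witnesses of $C[i,j]$ is precisely the set of common neighbors of $i$ and~$j$. So listing all wedges in $G$ amounts to listing, for every non-adjacent pair $\{i,j\}$ with $C[i,j] = 1$, every witness of $C[i,j]$ (each such witness $\ell$ produces the induced 2-path $i$--$\ell$--$j$, and conversely every wedge of $G$ arises this way).

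Next I would invoke the $c$-closed hypothesis to control the number of witnesses we must retrieve. By definition of $c$-closed, any two non-adjacent vertices have at most $c-1$ common neighbors, so the entries of $C$ relevant to wedge enumeration have at most $c-1$ witnesses each. The adjacent entries $C[i,j]$ (where $i$ and $j$ are joined by an edge in $G$) may have arbitrarily many witnesses, but those entries do not contribute wedges, so we simply discard them after checking $A[i,j]$ in $O(1)$ time per pair.

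Then I would apply the quoted witness lemma with parameter $k = c-1$ (or $k=c$, which only changes constants and vanishes into the asymptotics). The lemma guarantees that every entry of $C$ with at most $c-1$ witnesses has \emph{all} of its witnesses reported; in particular, every non-adjacent pair gets all of its common neighbors reported. The runtime is
\[ O\!\left(n^{2+o(1)} c + c^{(3-\omega-\alpha)/(1-\alpha)} n^{\omega} + n^{\omega}\log n\right), \]
matching the claimed bound for $p(n,c)$. Iterating through the $O(n^2)$ entries of $C$ and emitting the wedges requires time dominated by the lemma's cost (since each witness emitted is a wedge we must output anyway, and the total number of non-adjacent witnesses is at most $(c-1)n^2$, absorbed in the $n^{2+o(1)}c$ term).

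There is essentially no obstacle here beyond bookkeeping: the only subtle point is making sure we are legitimately in the ``bounded-witness'' regime of the lemma, which is exactly where the $c$-closed property is used. Everything else is a straightforward translation between common-neighbor enumeration and matrix-product witnesses.
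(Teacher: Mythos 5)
Your proposal is correct and follows exactly the paper's argument: form the boolean product $C=A^2$ of the adjacency matrix, observe that witnesses of $C[i,j]$ are common neighbors, use $c$-closure to bound the witness count of non-adjacent pairs by $c-1$, and invoke the G\k{a}sieniec--Kowaluk--Lingas lemma. Your extra remark about discarding adjacent pairs (whose entries may have many witnesses but contribute no induced 2-paths) is a correct and slightly more careful treatment of a point the paper leaves implicit.
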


\begin{proof}[Proof of Theorem~\ref{app:algthm}]

The algorithm follows naturally from the proof of Theorem~\ref{thm:init} with two additional ingredients:
\begin{itemize}
\item A preprocessing step to enumerate all wedges in the graph speeds up the later process of finding the intersection of the neighborhoods of two vertices (i.e. $N(u) \cap N(v)$ from the proof of Theorem~\ref{thm:init}). 
\item An algorithm of Tomita, Tanaka, and Takahashi~\cite{tomita} that generates all maximal cliques in any graph in time $O(3^{n/3})$. We apply this to the recursive calls on the small graphs $G[N(u) \cap N(v)]$ from the proof of Theorem~\ref{thm:init}). Let \textproc{Cliques}($G$) be a call to this algorithm.
\end{itemize}

The output of \textproc{Cliques}($G$) does not explicitly list every maximal clique, as this could take $\Omega(3^{n/3}n)$ time e.g. for a complete $\frac{n}{3}$-partite graph). Instead, the output of \textproc{Cliques}(G) is a forest $F$ where each node represents a vertex in $G$ and the collection of nodes on any path from root to leaf in $F$ form a maximal clique in $G$. The output of our algorithm will be of the same form. For any leaf $l\in V(F)$ let $K(l)$ be the maximal clique in $G$ on the set of vertices along the path from $l$ to the root of its tree in $F$.

See Algorithm~\ref{alg} for pseudocode describing our algorithm that generates a superset of the maximal cliques in a $c$-closed graph. The correctness of Algorithm~\ref{alg} follows from the proof of Theorem~\ref{thm:init}.
\begin{algorithm}
\caption{}\label{alg}
\begin{algorithmic}[1]
\Procedure {Preprocess(G)}{}
\State enumerate all wedges in $G$
\State $M\gets$ mapping such that $M[v]$ is the set of wedges with $v$ as an endpoint
\EndProcedure
\Procedure{CClosedCliques(G)}{}
\If {$|V(G)|=1$}
\State \Return $G$
\EndIf
\State fix an arbitrary vertex $v \in V(G)$
\State $F\gets \textproc{CClosedCliques}(G\setminus \{v\})$\label{recurse}
\For {each leaf $l$ in $F$ with $K(l)\subseteq N(v)$}\label{dfs}
\State add $v$ to $F$ as a child of $l$\label{dfs2}
\EndFor
\State $E'\gets$ the set of all edges in some wedge in $M[v]$ except for those edges incident to $v$
\State $H \gets G(V, E')$
\For {each vertex $u\in G\setminus(N(v)\cup \{v\})$}
\State $F_u\gets \textproc{Cliques}(G[N_H(u)])$
\State add $v$ to $F_u$ with an edge between $v$ and every current root in $F_u$
\EndFor
\State \Return $F\cup(\cup_u F_u)$
\EndProcedure
\end{algorithmic}
\end{algorithm} 

\paragraph{Runtime analysis.} \textproc{Preprocess} takes time $p(n,c)$. Let $T'(n,c)$ be the runtime of \textproc{CClosedCliques}. Line~\ref{recurse} makes a recursive call to \textproc{CClosedCliques}($G\setminus \{v\}$) and since $G\setminus \{v\}$ is $c$-closed, this takes time $T'(n-1,c)$. Lines~\ref{dfs} and~\ref{dfs2} can be implemented as a depth-first search of $F$ that traverses only the subtrees rooted at a vertex in $N(v)$. Next, $E'$ is the set of all edges between $N(v)$ and $G\setminus(N(v)\cup \{v\}$ and given $M$, it takes constant time per edge in $E'$ to find all of the edges in $E'$, construct $H$, and find $N_H(u)$ for all $u$. The number of edges between $E'$ is at most $n(c-1)$ since $N_H(u)$ is at most $c-1$. Then, each call to \textproc{Cliques} runs in time $O(3^{c/3})$. Thus, $T'(n,c)=T'(n-1,c)+O(3^{c/3}n)$, so $T'(n,c)= O(3^{c/3}n^2)$ and Algorithm~\ref{alg} runs in time $O(p(n,c)+3^{c/3} n^2)$.

The reason that \textproc{CClosedCliques} lists a superset of the maximal cliques rather than the exact set is the following. There could be a clique $K$ in $N(v)\cap N(u) \cap N(w)$ for some $u$ and $w$ such that $K$ is maximal in $N(v)\cap N(u)$ but not maximal in $N(v)\cap N(w)$. In this case $K\cup\{v\}$ will be reported in the output even though it is not a maximal clique. To list the exact set of maximal cliques, we make the following addition to the procedure \textproc{CClosedCliques} right before returning. Add every clique in $\cup_u F_u$ to a hash set $S$. Then iterate through every clique $K$ in $\cup_u F_u$ in order from largest to smallest, and check whether any subset of of $K$ is in $S$. Note that the number of vertices in $K$ is at most $c$. This increases the runtime to $O(p(n,c)+3^{c/3}2^ccn^2)$.

\end{proof}

\section{Appendix: Weakly $c$-closed graphs}\label{app:weak}


In this section we give experimental results regarding the $c$-closure and weak $c$-closure of well-studied social networks, an algorithm to compute the smallest value $c$ such than a given graph is weakly $c$-closed, and an equivalent definition of weakly $c$-closed graphs.

Recall the definition of a weakly $c$-closed graph.
\begin{definition}
Given $c$, a \emph{bad pair} is a non-adjacent pair of vertices with at least $c$ common neighbors.
\end{definition}

\begin{definition}\label{app:def1}
A graph is \emph{weakly $c$-closed} if there exists an ordering of the vertices $\{v_1,v_2,\dots,v_n\}$ such that for all $i$, $v_i$ is in no bad pairs in the graph induced by $\{v_i,v_{i+1},\dots,v_n\}$.
\end{definition}

\subsection{Experimental results}
Table~\ref{tab} shows the $c$-closure and weak $c$-closure of some well-studied social networks.
\begin {table}[ht]
\begin{center}
\begin{tabular}{| l | l | l | l | l | }
  		\hline
  &$n$&	$m$&	$c$&	weak $c$ \\
  \hline
  email-Enron&	36692&	183831&	161&	34 \\
  \hline
  p2p-Gnutella04	&10876	&39994	&24&	8 \\
  \hline  
wiki-Vote&7115	&103689&	420&	42\\
\hline 
ca-GrQc&5242	&14496&	41&	9\\
\hline 

\end{tabular}
\end{center}
\caption{The $c$-closure and weak $c$-closure of well-studied social networks. The data sets are from the Stanford Network Analysis Platform (SNAP)~\cite{snapnets} and are each categorized as either a social network, communication network, collaboration network, or internet peer-to-peer network. For the networks with directed edges, we analyze the underlying undirected graphs. For each network $G$, $n$ is the number of vertices, $m$ is the number of edges, $c$ is the smallest value $c$ such that $G$ is $c$-closed, and ``weak $c$'' is the smallest value $c$ such that $G$ is weakly $c$-closed.}\label{tab}
\end{table}

\subsection{Computing weak $c$-closure} 
To get an algorithm for computing the weak $c$-closure of a graph, we first show that the ordering of the vertices in the definition of weakly $c$-closed can be chosen greedily. In the following definition, we define a \emph{valid} ordering of the vertices as one that satisfies the definition of a weakly $c$-closed graph.
\begin{definition}\label{app:valid}
We say that an ordering of the vertices $\{v_1,v_2,\dots,v_n\}$ is \emph{valid} if for all $i$, the graph induced by $\{v_i,v_{i+1},\dots,v_n\}$ contains a vertex in no bad pairs.
\end{definition}
\begin{claim}\label{claim:greed}
In any weakly $c$-closed graph, a valid vertex ordering can be chosen as follows: for all $i$ in order from 1 to $n$, let $v_i$ be any arbitrary vertex in no bad pairs with respect to the graph induced by $\{v_i,v_{i+1},\dots,v_n\}$.
\end{claim}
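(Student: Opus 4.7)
The plan is to prove Claim~\ref{claim:greed} by induction on $n = |V(G)|$, leveraging two facts: the hereditary property of weakly $c$-closed graphs (already noted in the introduction) and the observation that any weakly $c$-closed graph on at least one vertex has a vertex in no bad pair. The base case $n = 1$ is immediate since a single vertex is trivially in no bad pair. For the inductive step, I would first check that the greedy process never gets stuck at the first step: since $G$ is weakly $c$-closed, there is some valid ordering $u_1, \ldots, u_n$ witnessing this, and by definition $u_1$ is in no bad pair in $G$, so the set of candidate vertices for the greedy rule's first pick is nonempty. Let $v_1$ be any such vertex picked by the greedy rule.

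Next, I would invoke the hereditary property of weakly $c$-closed graphs to conclude that $G \setminus \{v_1\}$ is weakly $c$-closed, and then apply the inductive hypothesis to obtain a greedily chosen valid ordering $v_2, \ldots, v_n$ of $G \setminus \{v_1\}$. Concatenating yields the ordering $v_1, v_2, \ldots, v_n$ of $G$. To verify that this is valid in the sense of Definition~\ref{app:def1}, observe that $v_1$ is in no bad pair in $G$ by construction, and that for each $i \geq 2$ the induced subgraph $G[\{v_i, \ldots, v_n\}]$ coincides with $(G \setminus \{v_1\})[\{v_i, \ldots, v_n\}]$ since $v_1 \notin \{v_i, \ldots, v_n\}$, so the inductive conclusion that $v_i$ is in no bad pair in the latter transfers to the former.

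The main subtle point, rather than a true obstacle, is that the greedy rule is permitted to pick \emph{any} vertex in no bad pair, not necessarily one that matches a previously fixed valid ordering witnessing the weak $c$-closure of $G$. The hereditary property is precisely what makes this arbitrary choice safe: regardless of which qualifying vertex we delete first, the remaining graph stays weakly $c$-closed, so the inductive hypothesis applies. If the hereditary property needed on-the-spot justification, I would just take any valid ordering of $G$, delete $v_1$ from it to obtain a subordering of $G \setminus \{v_1\}$, and note that removing a vertex can only decrease the number of common neighbors of any pair, so no new bad pairs are created and the subordering remains valid.
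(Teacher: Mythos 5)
Your proof is correct and follows essentially the same route as the paper: the paper fixes a valid ordering and shows that moving any vertex in no bad pairs to the front preserves validity (via its Lemma on bad pairs being inherited by induced subgraphs), which when iterated is exactly your delete-and-induct argument using the hereditary property. The only difference is bookkeeping --- you make the iteration an explicit induction on $n$, while the paper leaves it implicit after the single exchange step.
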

The following lemma will be useful.
\begin{lemma}\label{lem:subgraph}
If a vertex $v$ is in no bad pairs with respect to a graph $G$, then $v$ is also in no bad pairs with respect to any induced subgraph $H\subseteq G$.
\end{lemma}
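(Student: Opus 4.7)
The plan is to prove Lemma~\ref{lem:subgraph} by a direct contrapositive/contradiction argument, exploiting only the fact that adjacency and non-adjacency are preserved under taking induced subgraphs. Since both being a \emph{bad pair} and not being a bad pair are conditions determined entirely by the adjacency relation restricted to the relevant vertex set, the statement should be essentially immediate.

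First, I would suppose, for contradiction, that $v$ is in some bad pair in an induced subgraph $H \subseteq G$. By the definition of a bad pair, there is some vertex $u \in V(H)$ such that (i) $u$ and $v$ are non-adjacent in $H$, and (ii) $u$ and $v$ have at least $c$ common neighbors in $H$. My next step would be to lift both conditions to $G$: because $H$ is an induced subgraph, $u$ and $v$ are non-adjacent in $G$ as well (an edge of $G$ between two vertices of $H$ would automatically be an edge of $H$), and every common neighbor of $u$ and $v$ in $H$ is also a common neighbor in $G$ (since $V(H) \subseteq V(G)$ and $E(H) \subseteq E(G)$). Therefore $u$ and $v$ have at least $c$ common neighbors in $G$ and are non-adjacent in $G$, so $\{u,v\}$ is a bad pair of $G$ containing $v$, contradicting the hypothesis.

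There is no real obstacle here; the only thing to be careful about is the direction of the inclusions (neighbors in $H$ are a subset of neighbors in $G$, which is what we need, rather than the reverse), and the use of \emph{induced} (as opposed to arbitrary) subgraph, which is exactly what forces non-adjacency in $H$ to imply non-adjacency in $G$. Once the lemma is in hand, it will feed directly into the proof of Claim~\ref{claim:greed}: a greedy choice of $v_i$ that avoids bad pairs in $G[\{v_i,\dots,v_n\}]$ remains well-defined at each step, because the existence of such a vertex in the full graph (guaranteed by weak $c$-closedness) restricts, via the lemma, to the existence of such a vertex after deleting the earlier $v_j$'s.
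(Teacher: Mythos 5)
Your proof is correct and is essentially the paper's argument in contrapositive form: both rest on the same two observations, that non-adjacency in an induced subgraph corresponds to non-adjacency in $G$ and that common neighborhoods can only shrink when passing to $H$. No meaningful difference from the paper's direct one-line verification.
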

\begin{proof}
Fix a vertex $u\in V(G)$ not adjacent to $v$. By definition, $|N_G(v)\cap N_G(u)|<c$. The set of common neighbors of $u$ and $v$ in $H$ is a subset of the set of common neighbors of $u$ and $v$ in $G$ so $|N_H(v)\cap N_H(u)|<c$. Thus, $v$ is in no bad pairs with respect to $H$.
\end{proof}

\begin{proof}[Proof of Claim~\ref{claim:greed}]
Fix a valid ordering $O=\{v_1,v_2\dots,v_n\}$ of the vertices. It suffices to show that if $v_j$ is a vertex in no bad pairs, then the ordering $O'=\{v_j,v_1,v_2,\dots,v_{j-1},v_{j+1},\dots,v_n\}$ is also a valid ordering. By definition, for all $i$, $v_i$ is in no bad pairs with respect to the graph induced by $\{v_i,v_{i+1},\dots,v_n\}$ so by Lemma~\ref{lem:subgraph} for $i<j$, $v_i$ is also in no bad pairs with respect to the graph induced by $\{v_i,v_{i+1},\dots,v_{j-1},v_{j+1},\dots,v_n\}$.
\end{proof}
The fact that the vertex ordering can be chosen greedily suggests an $O(n^3)$ time algorithm for computing the weak $c$-closure of any graph $G$. First, square the adjacency matrix of $G$ to find the number of neighbors shared by each pair of vertices. Then, repeatedly find the minimum value of $c$ such that there exists a vertex $v$ in no bad pairs and remove $v$, updating the matrix. The $c$ value may be different at each iteration. When the graph is empty, return the maximum such value $c$ over all iterations. 

\subsection{Equivalent definition of weakly $c$-closed}

\begin{definition}\label{app:def2}
A graph $G$ is \emph{weakly $c$-closed} if every induced subgraph of $G$ contains a vertex in no bad pairs.
\end{definition}

\begin{claim}\label{claim:defs}
Definitions \ref{app:def2} and \ref{app:def1} are equivalent.
\end{claim}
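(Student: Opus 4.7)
The plan is to prove the two directions separately, using Lemma \ref{lem:subgraph} (which has already been established) as the main tool for one direction and a direct greedy construction for the other.

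For the direction that Definition \ref{app:def1} implies Definition \ref{app:def2}, suppose $G$ is weakly $c$-closed under Definition \ref{app:def1}, fix a valid ordering $\{v_1, v_2, \dots, v_n\}$, and let $H$ be any induced subgraph of $G$. I would let $i$ be the smallest index such that $v_i \in V(H)$. By the validity of the ordering, $v_i$ is in no bad pairs in $G[\{v_i, v_{i+1}, \dots, v_n\}]$. Since $V(H) \subseteq \{v_i, v_{i+1}, \dots, v_n\}$, the graph $H$ is an induced subgraph of $G[\{v_i, v_{i+1}, \dots, v_n\}]$, so by Lemma \ref{lem:subgraph}, $v_i$ is in no bad pairs in $H$ as well. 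This shows every induced subgraph of $G$ contains a vertex in no bad pairs.

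For the converse direction, I would construct an ordering greedily. Assuming Definition \ref{app:def2}, build $v_1, v_2, \dots, v_n$ inductively as follows: having chosen $v_1, \dots, v_{i-1}$, the induced subgraph $G[V \setminus \{v_1, \dots, v_{i-1}\}]$ is itself an induced subgraph of $G$, and so by hypothesis contains some vertex in no bad pairs; pick any such vertex as $v_i$. By construction, for every $i$, the vertex $v_i$ is in no bad pairs in $G[\{v_i, v_{i+1}, \dots, v_n\}]$, which is exactly Definition \ref{app:def1}.

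There is no real obstacle here; both directions are short and essentially definitional once Lemma \ref{lem:subgraph} is in hand. The only subtlety is making sure to invoke Lemma \ref{lem:subgraph} in the first direction rather than trying to extract the conclusion directly from the ordering, since the ordering only guarantees the no-bad-pairs condition on the specific suffix subgraphs and not on arbitrary induced subgraphs. The greedy direction mirrors the argument already used in Claim \ref{claim:greed} but does not require it as a black box.
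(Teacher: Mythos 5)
Your proof is correct and follows essentially the same route as the paper's: the greedy construction for the direction from Definition~\ref{app:def2} to Definition~\ref{app:def1}, and an application of Lemma~\ref{lem:subgraph} to the first vertex of $H$ appearing in the ordering for the converse. No gaps.
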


\begin{proof}
Recall the definition of a \emph{valid} vertex ordering (Definition~\ref{app:valid}). 

If $G$ satisfies Definition \ref{app:def2} then one can construct a valid vertex ordering by simply iteratively selecting and deleting a vertex in no bad pairs with respect to the current graph.

Now suppose $G$ satisfies Definition \ref{app:def1} with valid elimination ordering $O$. Given any subgraph $H\subseteq G$ consider the vertex $v_i$ in $H$ that appears in the elimination ordering $O$ before all other vertices in $H$. By definition, $v_i$ is in no bad pairs with respect to the graph induced by $\{v_i,v_{i+1},\dots,v_n\}$. $H$ is a subgraph of this graph so by Lemma~\ref{lem:subgraph}, $v_i$ is also in no bad pairs with respect to $H$.
\end{proof}



\begin{remark}
We argue that this definition of weakly $c$-closed is tight in the following sense. Note that if a graph $G$ is such that every subgraph $H\subseteq G$ has at most $\lfloor\frac{|V(H)|-1}{2}\rfloor$ bad pairs, then all $H\subseteq G$ must contain a vertex in no bad pairs so $G$ must be weakly $c$-closed. One might hope that the clique listing problem remains fixed-parameter tractable for some new definition of weakly $c$-closed of the form ``every subgraph of $G$ has at most $k$ bad pairs'' for some $k\geq \frac{|V(H)|}{2}$. To see why this is impossible, let $G$ be a complete $n/2$-partite graph i.e. the complement of a perfect matching. Then, the only bad pairs in $G$ are the endpoints of the $n/2$ non-edges so every subgraph $H\subseteq G$ has at most $\frac{V(H)}{2}$ bad pairs, yet $G$ has $2^{n/2}$ maximal cliques (choose one endpoint of each non-edge).

From the other side, such a definition is trivial for $k<\frac{|V(H)|}{2+c}$. That is, if a graph $G$ is not $c$-closed, then there exists a subgraph $H\subseteq G$ with at least $\frac{|V(H)|}{2+c}$ bad pairs. Specifically, consider the graph induced by a non-adjacent pair of vertices and $c$ of their common neighbors.

\end{remark}

\section{Appendix: Generalizations of $c$-closed graphs and weakly $c$-closed graphs}\label{app:general}
\subsection{$A$-bounded graphs}

In this subsection, we define \emph{$A$-bounded} graphs, a generalization of weakly $c$-closed graphs. In Definition~\ref{app:def2} of weakly $c$-closed graphs we require that every subgraph has at least one vertex $v$ in no bad pairs, while here we do allow $v$ to be in bad pairs but restrict the sizes of the common neighborhoods of $v$ with its non-neighbors.

\begin{definition}\label{def:Abound}
A graph $G$ is $A$-bounded if for any subgraph $H\subseteq G$, there is a vertex $v\in V(H)$ such that 
\[  \sum_{w \in V(H), w \notin N_H(v)} 3^{|N_H(v) \cap N_H(w)|/3} \leq A.\]
\end{definition}

Note that weakly $c$-closed graphs are $(n-2)3^{c/3}$-bounded.

Like weakly $c$-closed graphs, $A$-bounded graphs also have two equivalent interpretations: one in terms of subgraphs of $G$ (Definition~\ref{def:Abound}) and another in terms of an ordering of the vertices.

\begin{lemma}\label{lem:Abound}
A graph $G$ is $A$-bounded if and only if there is an ordering of the vertices of $G$: $v_1,v_2, \dots, v_n$ such that $\{v_i,v_{i+1}, \dots, v_i\}$ is $A$-bounded for each $i$. 
\end{lemma}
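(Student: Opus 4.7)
The plan is to mirror the proof of Claim~\ref{claim:defs} (the equivalence between the two definitions of weakly $c$-closed graphs), interpreting the ordering condition as the natural analog: for each $i$, the vertex $v_i$ is itself a witness of the $A$-bound in the induced subgraph $H_i := G[\{v_i, v_{i+1}, \dots, v_n\}]$, meaning
\[ \sigma_{H_i}(v_i) \le A, \quad \text{where} \quad \sigma_H(v) := \sum_{w \in V(H),\; w \notin N_H(v)} 3^{|N_H(v) \cap N_H(w)|/3}. \]

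The bridging fact I will prove first is a monotonicity statement analogous to Lemma~\ref{lem:subgraph}: for any vertex $v$ and any induced subgraphs $H' \subseteq H$ of $G$ both containing $v$, one has $\sigma_{H'}(v) \le \sigma_H(v)$. This should be immediate: since adjacency in an induced subgraph is inherited from $G$, every non-neighbor of $v$ in $H'$ is also a non-neighbor of $v$ in $H$, so the index set of the sum only shrinks; and for each surviving $w$, $N_{H'}(v) \cap N_{H'}(w) \subseteq N_H(v) \cap N_H(w)$, so no surviving summand grows.

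With monotonicity in hand, the forward direction proceeds by greedy peeling. Every induced subgraph of an $A$-bounded graph is itself $A$-bounded (the ``for any subgraph'' quantifier in Definition~\ref{def:Abound} is hereditary, since subgraphs of subgraphs are subgraphs), so I can iteratively pick witnesses: let $v_1$ be any witness in $G$, let $v_2$ be any witness in $G[V\setminus\{v_1\}]$, and so on. By construction each $v_i$ witnesses the $A$-bound in its $H_i$.

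For the reverse direction, given such an ordering and an arbitrary induced subgraph $H \subseteq G$, I will take $v_i$ to be the vertex of $V(H)$ with the smallest index in the ordering. Then $v_i$ witnesses the $A$-bound in $H_i$ by hypothesis, and since $H$ is an induced subgraph of $H_i$ containing $v_i$, monotonicity yields $\sigma_H(v_i) \le \sigma_{H_i}(v_i) \le A$, so $v_i$ is a witness in $H$ as well, verifying Definition~\ref{def:Abound}. The only delicate point is the monotonicity step; everything else is a direct adaptation of Claim~\ref{claim:defs}.
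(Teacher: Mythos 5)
Your proof is correct and is precisely the argument the paper intends: the paper omits the proof of Lemma~\ref{lem:Abound}, stating only that it is analogous to that of Claim~\ref{claim:defs}, and your monotonicity step is the exact analog of Lemma~\ref{lem:subgraph}, with the two directions handled by the same greedy-peeling and earliest-vertex arguments. Your reading of the (somewhat garbled) ordering condition---that $v_i$ itself witnesses the $A$-bound in $G[\{v_i,\dots,v_n\}]$---is the intended one.
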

The proof of Lemma \ref{lem:Abound} is analogous to that of Lemma~\ref{claim:defs} and we omit it.


\begin{theorem}\label{thm:Abound}
The number of maximal cliques in an $A$-bounded graph with $n$ vertices is at most $nA$. 
\end{theorem}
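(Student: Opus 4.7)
The plan is to mimic the peeling argument used in the proof of Theorem~\ref{thm:init}, replacing the crude bound $3^{(c-1)/3} \cdot n$ on type~3 cliques with the tighter bound $A$ that follows directly from the $A$-bounded definition. By Lemma~\ref{lem:Abound}, we may fix an ordering $v_1, v_2, \dots, v_n$ such that, for each $i$, the induced subgraph on $\{v_i, v_{i+1}, \dots, v_n\}$ is $A$-bounded. We induct on $n$: let $f(n)$ denote the maximum number of maximal cliques in an $n$-vertex $A$-bounded graph, with the base case $f(1) = 1 \leq A$ (note $A \geq 1$ whenever the graph is nonempty).

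For the inductive step, pick $v = v_1$, the vertex guaranteed by Definition~\ref{def:Abound} at the first level. Partition the maximal cliques of $G$ into the same three types as in the proof of Theorem~\ref{thm:init}: (1) cliques avoiding $v$ and maximal in $G \setminus \{v\}$; (2) cliques containing $v$ with $K \setminus \{v\}$ maximal in $G \setminus \{v\}$; (3) cliques containing $v$ with $K \setminus \{v\}$ not maximal in $G \setminus \{v\}$. As before, types~1 and~2 together are in bijection with maximal cliques of $G \setminus \{v\}$, contributing at most $f(n-1)$, and, since $G \setminus \{v\}$ inherits the $A$-bounded property via the ordering $v_2, \dots, v_n$, the inductive hypothesis gives $f(n-1) \leq (n-1)A$.

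For type~3, the same three properties (A), (B), (C) from the proof of Theorem~\ref{thm:init} still hold: there exists $u \notin N(v) \cup \{v\}$ with $K \setminus \{v\} \subseteq N(u)$, and moreover $K \setminus \{v\}$ must be a maximal clique in $G[N(v) \cap N(u)]$. Hence the number of type~3 cliques is at most
\[
\sum_{u \in V(G) \setminus (N(v) \cup \{v\})} (\text{number of maximal cliques in } G[N(v) \cap N(u)]).
\]
Applying the Moon--Moser bound of $3^{k/3}$ on the number of maximal cliques in a $k$-vertex graph, this sum is at most $\sum_{u \notin N(v)} 3^{|N(v) \cap N(u)|/3} \leq A$, where the last inequality is exactly the defining property of the $A$-bounded vertex $v$ (the term for $u = v$ contributes $1$ and may be absorbed, or handled by noting $v \notin N(v)$ so it is already part of the sum).

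Putting it together gives the recursion $f(n) \leq f(n-1) + A \leq (n-1)A + A = nA$, completing the induction. The argument is essentially routine given Theorem~\ref{thm:init}; the only subtle point is lining up the quantity $\sum_{w \notin N_H(v)} 3^{|N_H(v) \cap N_H(w)|/3}$ in Definition~\ref{def:Abound} with the Moon--Moser-based bound on type~3 cliques, so that the definition is seen to be precisely tailored to make this step work.
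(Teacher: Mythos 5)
Your proof is correct and follows essentially the same route as the paper: the paper proves Theorem~\ref{thm:gen} (the same three-type peeling decomposition with the Moon--Moser bound applied to $G[N(v)\cap N(u)]$) and then obtains Theorem~\ref{thm:Abound} as an immediate corollary via the ordering characterization, which is exactly the argument you carry out directly. The only nitpick is that the $u=v$ term in the defining sum contributes $3^{|N_H(v)|/3}$ rather than $1$, but since it is nonnegative this does not affect your bound.
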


Theorem \ref{thm:Abound} follows as a corollary from the following theorem which generalizes Theorem~\ref{thm:init}.

\begin{theorem}\label{thm:gen}
Let $v_1, v_2, \dots, v_n$ be an arbitrary ordering of the vertices of $G$. Let $S=\{\{v_i,v_j\}\not\in E(G)\big\vert| N(v_i) \cap N(v_j) \setminus \{v_1, \dots, v_i\}| \neq 0\}$ i.e. the set of pairs of non-adjacent vertices whose common neighborhood contains at least one vertex that comes later than $i$ in the ordering. Then the number of maximal cliques in $G$ is at most 
 \[  \sum_{i=1}^{n-1}  \sum_{\substack{j=i+1,\\\{v_i,v_j\}\in S}}^n  3^{ | N(v_i) \cap N(v_j) \setminus \{v_1, \dots, v_i\}|  /3} .
 \]
\end{theorem}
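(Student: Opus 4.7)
My plan is to iterate the three-way decomposition from the proof of Theorem~\ref{thm:init} along the given ordering. For each $i$, I will set $G_i = G[\{v_i, v_{i+1}, \ldots, v_n\}]$ and apply the type~1/2/3 classification to the vertex $v_i$ inside $G_i$; this immediately gives the recursion $M(G_i) \leq M(G_{i+1}) + T_i$, where $M(H)$ denotes the number of maximal cliques of $H$ and $T_i$ counts the type~3 maximal cliques of $G_i$ with respect to $v_i$ — those containing $v_i$ whose restriction to $G_{i+1}$ fails to be maximal. Unrolling this from $i=1$ to $n-1$, together with the base case $M(G_n) = 1$, reduces the theorem to bounding each $T_i$ row-by-row.

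For the per-row bound, I will invoke the witness argument of Theorem~\ref{thm:init} verbatim inside $G_i$: every type~3 clique $K$ admits a vertex $u \in V(G_i) \setminus (N_{G_i}(v_i) \cup \{v_i\})$ such that $K \setminus \{v_i\}$ is a maximal clique of $G_i[N_{G_i}(v_i) \cap N_{G_i}(u)]$. The candidate witnesses $u$ are exactly the vertices $v_j$ with $j > i$ and $\{v_i, v_j\} \notin E(G)$, and a short calculation using $v_i \notin N(v_i)$ gives
\[ N_{G_i}(v_i) \cap N_{G_i}(v_j) \;=\; N(v_i) \cap N(v_j) \setminus \{v_1, \ldots, v_i\}. \]
Applying the Moon--Moser bound to this induced subgraph yields the per-witness estimate $3^{|N(v_i) \cap N(v_j) \setminus \{v_1, \ldots, v_i\}|/3}$, so that
\[ T_i \;\leq\; \sum_{\substack{j > i \\ \{v_i,v_j\} \notin E(G)}} 3^{\,|N(v_i) \cap N(v_j) \setminus \{v_1,\ldots,v_i\}|/3}. \]
The pairs excluded from $S$ are precisely those with empty later-common-neighborhood, which correspond to the degenerate case $K \setminus \{v_i\} = \emptyset$ and are already absorbed by the base case of the recursion, leaving exactly the $i$-th row of the claimed double sum.

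The hard part will be mostly bookkeeping: the witness construction of Theorem~\ref{thm:init} must be applied to $G_i$ rather than to $G$, so neighborhoods have to be intersected with $\{v_{i+1}, \ldots, v_n\}$, and one must verify that each type~3 clique is charged to a single witness pair $\{v_i, v_j\} \in S$ rather than double-counted. Once this local-to-global translation of neighborhoods is clean, the rest of the argument is a direct repetition of Theorem~\ref{thm:init} at each peeling step, and no new combinatorial input is required.
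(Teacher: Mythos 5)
Your proof is essentially identical to the paper's: the same vertex-by-vertex peeling along the given ordering, the same type~1/2/3 classification of maximal cliques at each step, and the same witness argument bounding the type~3 cliques via Moon--Moser applied to $G_i[N_{G_i}(v_i)\cap N_{G_i}(v_j)] = G[N(v_i)\cap N(v_j)\setminus\{v_1,\dots,v_i\}]$. The only place you go beyond the paper is the remark about pairs excluded from $S$ (the paper silently sums over all non-neighbors $u$ of $v_i$ in $G_{i+1}$), and your claim that the empty-common-neighborhood case is ``absorbed by the base case'' is the one slightly loose spot --- but it concerns a degenerate situation (e.g.\ a vertex isolated in $G_i$) that the paper's own write-up does not handle any more carefully.
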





\begin{proof}
We generalize the proof of Theorem \ref{thm:init}. Let $G_i$ be the induced subgraph of $G$ on the vertices $\{ v_{i+1},\dots, v_n\}$.
Let $F(G_i)$ be the number of maximal cliques in $G_i$. We will write a recurrence for $F(G_i)$ in terms of subgraphs of $G$. 

In $G_i$, every maximal clique $K$ is one of the following types: 
\begin{enumerate}
\itemsep0em 
\item $v_{i+1}\not\in K$ and $K$ is maximal in $G_{i+1}$.
\item $v_{i+1}\in K$ and $K\setminus\{v_{i+1}\}$ is maximal in $G_{i+1}$.
\item $v_{i+1}\in K$ and $K$ is not maximal in $G_{i+1}$. 
\end{enumerate} Every type 1 or 2 maximal clique can be obtained by starting with a clique maximal in $G_{i+1}$ and adding $v_{i+1}$ if possible so the number of such cliques is at most $F(G_{i+1})$. 

Now we bound the number of maximal cliques of type 3. Let $K$ be such a clique. Since $K$ is not maximal in $G_{i+1}$, there must exist a vertex $u\in G_{i+1}\setminus N(v_{i+1})$ such that $K\setminus v_{i+1}\subseteq N(u)$. For each $u\in G_{i+1}\setminus N(v_{i+1})$, the total number of maximal cliques $K$ of type 3 such that $K\setminus v_{i+1}\subseteq N(u)$ is at most $3^{|N(v_{i+1})\cap N(u) \setminus \{ v_1, \dots, v_{i}\}|/3}$. 
Therefore, we have 
\[ f(G_i) \leq f(G_{i+1}) + \sum_{u \in V(G_{i+1})\setminus N(v_{i+1}) }  3^{|N(v_{i+1})\cap N(u) \setminus \{ v_1, \dots, v_{i}\}|/3}.
\]
By iterating the above inequality until $G_0=G$, we obtain the desired inequality. 
\end{proof}

\subsection{Graphs with given common neighborhood statistics}

In the definition of a $c$-closed graph, we require that no pair of non-adjacent vertices has $c$ common neighbors, while here we allow such pairs and consider the number of pairs with exactly $i$ common neighbors for all $i$.

\begin{definition}
For any integer $0 \leq i \leq n-2$, let $p(i)$ be the number of pairs of non-adjacent vertices with exactly $i$ common neighbors. 
\end{definition}
Note that $c$-closed graphs have $p(i) = 0$ for all $i > c$ and $\sum_{i>c } p(i)$ counts the total number of bad pairs in $G$. 


\begin{theorem} \label{thm:stat} 
The number of maximal cliques in any graph is at most $\sum_{i > 0} 8p(i) \frac{3^{i/3}}{i+2} .$
\end{theorem}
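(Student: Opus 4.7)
The plan is to apply Theorem \ref{thm:gen} to a uniformly random ordering $v_1,\dots,v_n$ of $V(G)$ and then estimate the expectation of the resulting bound via linearity.

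Fix any non-adjacent pair $\{u,v\}$ with exactly $i$ common neighbors $w_1,\dots,w_i$. On the right-hand side of Theorem \ref{thm:gen}, this pair contributes $3^{X/3}$ when $X\ge 1$ (and $0$ otherwise), where $X$ is the number of common neighbors of $u,v$ appearing strictly after $\min(\mathrm{pos}(u),\mathrm{pos}(v))$ in the ordering. Since $X$ depends only on the relative order of the $i+2$ vertices in $\{u,v,w_1,\dots,w_i\}$, I would restrict attention to a uniformly random permutation of those elements.

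The key combinatorial observation is: if the earlier of $u,v$ occupies relative position $k^{*}\in\{1,\dots,i+1\}$, then the $k^{*}-1$ positions before it are all filled by common neighbors, while among the $i+2-k^{*}$ positions after it, exactly one is the later of $u,v$ and the remaining $i+1-k^{*}$ are common neighbors; thus $X=i+1-k^{*}$. A direct count of such permutations gives $\Pr(k^{*}=k)=\frac{2(i+2-k)}{(i+1)(i+2)}$. Writing $x=i+1-k^{*}$, the expected contribution of this pair equals
\[
\sum_{x=1}^{i} 3^{x/3}\cdot\frac{2(x+1)}{(i+1)(i+2)} \;\le\; \frac{2}{i+2}\sum_{x=1}^{i} 3^{x/3} \;\le\; \frac{2}{i+2}\cdot\frac{3^{1/3}}{3^{1/3}-1}\cdot 3^{i/3} \;\le\; \frac{8\cdot 3^{i/3}}{i+2},
\]
using $x+1\le i+1$ and $\frac{3^{1/3}}{3^{1/3}-1}<4$.

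Summing the expected contributions over all $p(i)$ pairs with common-neighborhood size $i$ and over all $i\ge 1$, the expectation of the Theorem \ref{thm:gen} bound is at most $\sum_{i>0}\frac{8\,p(i)\,3^{i/3}}{i+2}$. Therefore some specific ordering attains a value at most this, and applying Theorem \ref{thm:gen} to that ordering completes the proof. The main technical point to verify carefully is the distribution of $k^{*}$; once that is in hand the rest is a short geometric-series estimate.
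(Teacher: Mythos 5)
Your proof is correct and follows essentially the same route as the paper's: apply Theorem~\ref{thm:gen} to a uniformly random ordering, use linearity of expectation, and compute the distribution of the position of the earlier of $u,v$ among the $i+2$ relevant vertices (your $\Pr(k^{*}=k)=\frac{2(i+2-k)}{(i+1)(i+2)}$ matches the paper's $\frac{2(s+2-(k+1))}{(s+1)(s+2)}$). The only difference is that you carry out explicitly the geometric-series estimate that the paper dismisses as ``routine computation,'' and your constants check out.
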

\begin{proof}
Directly from Theorem~\ref{thm:gen}, we have that the number of maximal cliques in any graph $G$ is at most
 \begin{align*} & \sum_{i=1}^{n-1}  \left(  \sum_{j=i+1}^n  1_{v_i, v_j \text{not adjacent}} \cdot 3^{ | N(v_i) \cap N(v_j) \setminus \{v_1, \dots, v_i\}|  /3}\right)  \\
 \leq& \sum_{i=1}^{n-1}  \left(  \sum_{j=i+1}^n  1_{v_i, v_j \text{not adjacent}} \cdot 3^{ | N(v_i) \cap N(v_j)|  /3}\right)  \\   
 = & \sum_{i \geq 0} p(i) 3^{i/3}.
 \end{align*}
 
 We can do better by applying Theorem~\ref{thm:gen} using a uniformly random ordering of the vertices in $G$. The number of maximal cliques in $G$ is at most
 \begin{align*}
\mathbb{E}\left[ \sum_{i=1}^{n-1}  \left(  \sum_{j=i+1}^n  1_{v_i, v_j \text{not adjacent}} \cdot 3^{ | N(v_i) \cap N(v_j) \setminus \{v_1, \dots, v_i\}|  /3}\right)\right],
 \end{align*}
 where the expectation is over all ordering of the vertices in $G$ where each order appears uniformly at random. 
 
 By linearity of expectation, for any two vertices $u, v$ which are not adjacent, we want to compute \begin{equation}\mathbb{E}\left[3^{|N(v)\cap N(u) \setminus \{ \text{vertices come before } u, v \text{ in the ordering}  \}|/3} \cdot 1_{u,v \text{ not adjacent}}\right]. \label{eq:exp} \end{equation}
 Let $s=|N(u)\cap N(v)|$. This expectation can simply be written as 
 \[ \sum_{k=0}^s 3^{(s-k)/3} \text{Pr}\left( \text{exactly } k \text{ vertices come before }u, v \text{ in the ordering} \right).
 \]
 The probability that there are exactly $k$ vertices before $u, v$ in the random ordering is the probability that when permute the $|N(u)\cap N(v)| = s$ vertices together with $u,v$, there are exactly $k$ vertices from $N(u) \cap N(v)$ that come before $u$ and $v$. In other words, it means that in the random permutation of size $s +2$, one of $u,v$ comes at the $k+1$-th position, and the other one comes after the $k+1$-th position. This probability is $\frac{2(s+2-(k+1))}{(s+1)(s+2)}$. The denominator is the number of ways to place $u, v$ in the $s+2$ positions; while the numerator is the probability that one of $u,v$ is at the $(k+1)$-th position, and the other one is at the last $(s+2) - (k+1)$ positions. 
 Therefore
  \[ \sum_{k=0}^s 3^{(s-k)/3} \text{Pr}\left( \text{exactly } k \text{ vertices come before }u, v \text{ in the ordering} \right) = \sum_{0 \leq k \leq s}3^{(s-k)/3} \frac{2(s+1-k)}{(s+1)(s+2)}.
 \]
 Therefore, (\ref{eq:exp}) is equal to
 \begin{align*}
 & \sum_{u, v\text{ not adjacent}} \sum_{s=1}^n \sum_{0 \leq k \leq s}3^{(s-k)/3} \frac{2(s+1-k)}{(s+1) (s+2)}\cdot \mathbf{1}_{|N(u)\cap N(v)|=s} \\
 = & \sum_{s=0}^n p(s) \left(\sum_{0 \leq k \leq s}3^{(s-k)/3} \frac{2(s+1-k)}{(s+1) (s+2)}\right).
\end{align*}
The last equality holds by rearranging the terms and combining all the non-adjacent pairs $u,v$ with $|N(u)\cap N(v)| = s$. For each fixed $s$, by definition there are $p(s)$ such pairs $u,v$. 
 Then by some routine computation, we obtain the desired bound. 
\end{proof}

Note that Theorem~\ref{thm:stat} implies Theorem~\ref{thm:init}. This is because if $G$ is $c$-closed, $p(i) = 0$ for all $i > c$ so by Theorem~\ref{thm:stat} the number of maximal cliques in $G$ is at most  $\sum_{i \leq c} p(i) 3^{i/3} \leq \binom{n}{2} 3^{c/3}$ since $\sum p(i)$ is at most the total number of pairs of vertices in $G$. 

 
\begin{lemma}
Theorem~\ref{thm:stat} is tight up to a constant.
\end{lemma}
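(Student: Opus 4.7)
The plan is to show, for each $i \geq 1$, that there is a graph in which the $i$-th term of the sum $\sum_{j>0} 8 p(j) \cdot 3^{j/3}/(j+2)$ is essentially the only nonzero contribution and in which the true number of maximal cliques matches this term up to a constant factor. The natural construction is a disjoint union of Moon--Moser graphs (complete multipartite graphs with parts of size~$3$), which are the graphs maximizing the number of maximal cliques on a fixed vertex count.

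\textbf{Main case ($i\equiv 0 \pmod 3$).} Fix $i \geq 3$ with $3 \mid i$, and let $M$ denote the complete multipartite graph on $i+3$ vertices with $(i+3)/3$ parts of size~$3$. Let $G$ be the disjoint union of $k$ copies of $M$. I would verify three facts about $G$. First, every non-adjacent pair of vertices in $G$ either lies within a common part (in which case the common neighbors are exactly the $(i+3)-3=i$ other vertices of its copy, contributing to $p(i)$) or lies in two different copies (contributing only to $p(0)$). Thus $p(i) = k(i+3)$ and $p(j) = 0$ for every $j > 0$ with $j \neq i$. Second, since vertices in different copies are non-adjacent, every maximal clique of $G$ lies inside a single copy of $M$ and is obtained by picking one vertex from each of its $(i+3)/3$ parts; hence $G$ has exactly $k \cdot 3^{(i+3)/3} = 3k \cdot 3^{i/3}$ maximal cliques. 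Third, Theorem~\ref{thm:stat} evaluated on $G$ gives $8 p(i) \cdot 3^{i/3}/(i+2) = 8k(i+3) \cdot 3^{i/3}/(i+2)$, which is within a factor of $\frac{8}{3}\cdot\frac{i+3}{i+2} = O(1)$ of the actual count.

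\textbf{Other residues and main obstacle.} For $i \not\equiv 0 \pmod 3$, I would use the natural Moon--Moser variant on $i+3$ vertices in which one part has size~$2$ (when $i \equiv 2$) or two parts have size~$2$ (when $i \equiv 1$), with the remaining parts of size~$3$. A constant number of non-adjacent pairs then land in $p(i+1)$ rather than $p(i)$, but the variant still has $\Theta(3^{i/3})$ maximal cliques per copy, so the ratio of Theorem~\ref{thm:stat}'s upper bound to the true count remains $O(1)$. There is no serious obstacle; this is a direct calculation. The only small point to confirm is that inter-copy pairs contribute only to $p(0)$ and therefore do not enter the sum $\sum_{j>0}$ of Theorem~\ref{thm:stat}.
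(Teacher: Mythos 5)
Your proposal is correct and rests on the same key construction as the paper: the paper's proof is exactly your main case with a single copy ($k=1$, $i=n-3$), i.e.\ the complete $n/3$-partite graph with parts of size $3$, for which $p(n-3)=n$ and the bound evaluates to roughly $\tfrac{8}{3}\cdot 3^{n/3}$ against $3^{n/3}$ actual maximal cliques. Your extra work (disjoint unions and the other residues of $i$) proves a slightly stronger per-term tightness that the lemma as stated does not require.
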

\begin{proof}
Let $G$ be the complete $n/3$-partite graph i.e. the complement of a disjoint union of triangles. $G$ has $3^{n/3}$ maximal cliques. Also, $p(n-3) = n$ while $p(i) = 0$ for all other $i$ so Theorem~\ref{thm:stat} gives $8n\cdot\frac{ 3^{(n-3)/3}}{n-3+2} \simeq \frac{8}{3} \cdot 3^{n/3}$. 
\end{proof}

\section{Appendix: Cliques in $c$-closed, $K_k$-free graphs}\label{app:kk}

We leave as an open question the exact exponent of $n$ (between $3/2$ and $2-2^{1-c}$) in the expression for the maximum number of maximal cliques in a $c$-closed graph. In this section we show that if there exist $c$-closed, $K_k$-free graphs that achieve our upper bound (i.e. have $\Omega(n^{2-2^{1-c}}$ maximal cliques), they must have a certain distribution of clique sizes. For all positive integers $j$, let $n_j(G)$ denote the number of (not necessarily maximal) $K_j$'s in $G$.

\begin{theorem}\label{thm:kkfree}
For all constant integers $c\geq 1$, $k\geq 2$, and $j\geq 2$, if $G$ is a $c$-closed, $K_k$-free graph on $n$ vertices,
\begin{enumerate}
\item $n_2(G)= O(n^{3/2})$ 
\item for all $j>2$, $n_{j+1}(G)= O(n_j(G)^{1/2}n)$. 
\end{enumerate}
\end{theorem}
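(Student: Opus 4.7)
The plan is to establish Part~1 by induction on $k$ combined with a wedge-counting argument, and Part~2 by downward induction on $j$ using Cauchy--Schwarz on extensions of $K_j$-cliques. The two parts rely on related double-counting identities but will be proved essentially independently.

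For Part~1 I induct on $k$. The base case $k = 3$ is the classical triangle-free argument: no edge lies in a triangle, so only non-adjacent pairs of vertices contribute wedges, and $c$-closure gives $\sum_v \binom{d(v)}{2} = \sum_{\{u,w\}\text{ non-adj}}|N(u) \cap N(w)| \leq (c-1)\binom{n}{2}$. Together with the convexity bound $\sum_v \binom{d(v)}{2} \geq 2m^2/n - m$, where $m = n_2(G)$, this yields $m = O(\sqrt{c}\,n^{3/2})$. For the step $k \geq 4$, use that $G[N(v)]$ is $c$-closed and $K_{k-1}$-free for every vertex $v$, so by induction $n_2(G[N(v)]) \leq C_{c,k-1}\, d(v)^{3/2}$. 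Summing the identity $\sum_v n_2(G[N(v)]) = 3 n_3(G)$ gives $3 n_3(G) \leq C_{c,k-1}\sum_v d(v)^{3/2}$, and H\"older's inequality bounds $\sum_v d(v)^{3/2} \leq \bigl(\sum_v d(v)^2\bigr)^{3/4} n^{1/4}$. Since $\sum_v d(v)^2 = 2m + 2W$ with $W$ the number of wedges, and the wedge count satisfies $W \leq (c-1)\binom{n}{2} + 3 n_3(G)$, one obtains a self-referential inequality $n_3(G) \leq O_{c,k}\bigl((cn^2 + n_3(G) + m)^{3/4}\, n^{1/4}\bigr)$. A case analysis on which of $cn^2$, $n_3(G)$, $m$ dominates yields $n_3(G) = O_{c,k}\bigl(n^{7/4} + m^{3/4} n^{1/4}\bigr)$; plugging back into the wedge count gives $m^2 \leq O_{c,k}\bigl(cn^3 + n^{11/4} + m^{3/4} n^{5/4}\bigr)$, which solves to $m = O_{c,k}(n^{3/2})$.

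For Part~2, fix $j > 2$ and write $N(S) = \bigcap_{v \in S} N(v)$ for each $K_j$-clique $S$; since each $K_{j+1}$ is counted once per choice of removed vertex, $(j+1)\, n_{j+1}(G) = \sum_S |N(S)|$, and Cauchy--Schwarz gives $(j+1)^2\, n_{j+1}(G)^2 \leq n_j(G) \sum_S |N(S)|^2$. I bound $\sum_S |N(S)|^2$ by viewing it as the count of ordered triples $(S,u,v)$ with $u,v \in N(S)$ and splitting by the relation between $u$ and $v$: the diagonal $u = v$ contributes $(j+1)\, n_{j+1}(G)$; non-adjacent pairs contribute at most $\binom{c-1}{j} n^2$ because $|S| \leq |N(u) \cap N(v)| \leq c - 1$ by $c$-closure; and adjacent pairs contribute $(j+1)(j+2)\, n_{j+2}(G)$, since each $K_j$ in $N(u) \cap N(v)$ together with the edge $uv$ is a $K_{j+2}$ in $G$. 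I then proceed by downward induction on $j$ from $j = k - 2$, where the base case holds because $n_{j+2} = n_k = 0$ (as $G$ is $K_k$-free); in the step, the inductive hypothesis $n_{j+2}(G) = O(\sqrt{n_{j+1}(G)}\, n)$ absorbs that term, and solving the resulting polynomial inequality in $n_{j+1}(G)$ yields $n_{j+1}(G) = O(\sqrt{n_j(G)}\, n)$.

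The main obstacle is the self-consistency of both bootstrapping arguments. In Part~1, the case analysis on the self-referential inequality for $n_3(G)$ must be carried out carefully to exclude regimes that would contradict $m = O(n^{3/2})$. In Part~2, solving the resulting polynomial inequality cleanly requires that $n_j(G) = O(n^2)$ throughout the induction; this is automatic once one iterates Parts~1 and~2 to obtain $n_j(G) = O\bigl(n^{2 - 2^{1-j}}\bigr)$ for every $j \geq 2$, but it effectively ties the two inductions together. Tracking the implied constants through the nested inductions is routine but produces rather explicit functions of $c$ and $k$.
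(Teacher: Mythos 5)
Your Part~1 is correct, though it takes a more roundabout route than the paper's: the paper never counts triangles or sets up a self-referential inequality. It observes instead that, by induction, $G[N(v)]$ has $O(d(v)^{3/2})$ edges and hence $\Omega(d(v)^2)$ \emph{non-edges}, and that each non-adjacent pair of $G$ lies in at most $c-1$ common neighborhoods; comparing $\sum_v d(v)^2/(c-1)$ with the trivial bound of $n^2$ non-edges gives $d=O(n^{1/2})$ in one line. Your wedge/H\"older bootstrapping does reach the same conclusion (I checked that the case analysis closes), but it is doing extra work.

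The genuine gap is in Part~2. Writing $x=n_{j+1}(G)$, your inequality reads $x^2\lesssim n_j x+n_j n^2+n_j x^{1/2}n$ after the inductive substitution, and the cases where the first or third term dominates give $x\lesssim n_j$ and $x\lesssim n_j^{2/3}n^{2/3}$; both are $\lesssim n_j^{1/2}n$ \emph{only if} $n_j\lesssim n^2$. You propose to supply $n_j=O(n^2)$ by ``iterating Parts~1 and~2 upward,'' but that iteration uses the inequality $n_{j+1}=O(n_j^{1/2}n)$ at the very index in question, whose proof by your downward induction requires the corresponding $O(n^2)$ bounds at all higher indices, which are only available from the same upward iteration. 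Concretely: your inequality at $j=3$ needs $n_3=O(n^2)$, which you would get from the inequality at $j=2$, which by the downward induction needs the inequality at $j=3$ first. This circularity is fixable within your framework: every $K_j$ lies in a maximal clique, $G$ has at most $3^{(c-1)/3}n^2$ maximal cliques by Theorem~\ref{thm:init}, and each maximal clique of a $K_k$-free graph has at most $k-1$ vertices, so $n_j\le\binom{k-1}{j}3^{(c-1)/3}n^2=O_{c,k}(n^2)$ a priori; with that in hand your adjacent-pairs term is also $O(n^2)$ outright and the downward induction is unnecessary. The paper sidesteps the issue differently: it applies Part~1 to $G[N(S)]$, which is $(c-j)$-closed and $K_{k-j}$-free, to conclude that $G[N(S)]$ has only $O(|N(S)|^{3/2})$ edges and hence that almost all pairs inside $N(S)$ are non-adjacent; since each non-edge of $G$ lies in $N(S)$ for at most $\binom{c-1}{j}$ cliques $S$, this bounds $\sum_S|N(S)|^2$ by $O(n^2)$ with no reference to $n_{j+2}$, and the convexity (Cauchy--Schwarz) step finishes as in your write-up.
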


Remarks: Solving the recurrence in the theorem statement reveals that for all constant integers $j\geq 2$, $G$ has $O(n^{2-2^{1-j}})$ $j$-cliques. Notice the similarity of this result to Theorem~\ref{thm:imp}. For example, Theorem~\ref{thm:imp} says that 2, 3, and 4-closed graphs have $O(n^{3/2})$, $O(n^{7/4})$, and $O(n^{15/8})$ maximal cliques respectively, while Theorem~\ref{thm:kkfree} says that for all constants $c$ and $k$, any $c$-closed, $K_k$-free graph has $O(n^{3/2})$ edges, $O(n^{7/4})$ triangles, $O(n^{15/8})$ $K_4$'s etc. Furthermore, it says that $G$ can only have $\Theta(n^{2-2^{1-j}})$ $j$-cliques if it has $\Theta(n^{2-2^{1-i}})$ $i$-cliques for all $i<j$.

\begin{proof}[Proof of Theorem~\ref{thm:kkfree}] $ $
\begin{enumerate}
\item We proceed by induction on $k$.

{\bf Base case:} If $k=2$, $G$ has no edges and the result is trivial. 

{\bf Inductive hypothesis:} Suppose $n_2(G)= O(n^{3/2})$ for $k=i$.

{\bf Inductive step:} Let $k=i+1$. Let $d$ be the average degree of a vertex in $G$. 

For all $v\in V(G)$, $G[N(v)]$ is $c-1$-closed and $K_{i}$-free so by the inductive hypothesis, $G[N(v)]$ has $O(d(v)^{3/2})$ edges and thus $\Omega(d(v)^2)$ non-edges. Since each non-edge is in at most $c-1$ neighborhoods and $c$ is a constant, the total number of non-edges in $G$ is 
\begin{align*}
\Omega(\sum_{v\in V(G)}d(v)^2/(c-1))&=\Omega(\sum_{v\in V(G)}d^2)\\&=\Omega(nd^2).
\end{align*}
$G$ can have no more than $n^2$ non-edges so $nd^2\in O(n^2)$, or equivalently $d= O(n^{1/2})$, so $n_2(G)= O(n^{3/2})$.

\item We generalize the proof of part 1 of the theorem. Fix $j$. Let $\cJ$ be the set of all $K_j$'s in $G$. For a given $J\in \cJ$, let $N(J)=\bigcap_{v\in V(J)} N(v)$. Let $s$ be the average value of $|N(J)|$ over all $J\subseteq \cJ$. For all $J\in \cJ$, $G[N(J)]$ is $(c-j)$-closed and $K_{k-j}$-free so by part 1 of the theorem, $G[N(J)]$ has $O(|N(J)|^{3/2})$ edges and thus $\Omega(|N(J)|^2)$ non-edges. Since each pair of non-adjacent vertices has at most $c-1$ common neighbors, each such pair is in $N(J)$ for at most $\binom{c-1}{j}$ choices of $J\in \cJ$. Then, since $c$ is constant, the total number of non-edges in $G$ is 
\begin{align*}
\Omega\left(\sum_{J\in \cJ}\frac{|N(J)|^2}{\binom{c-1}{j}}\right)&=\Omega\left(\sum_{J\in \cJ}|N(J)|^2\right)\\&=\Omega(\sum_{J\in \cJ}s^2)\\&=\Omega(n_j(G)s^2).
\end{align*}
$G$ can have no more than $n^2$ non-edges so $n_j(G)s^2= O(n^2)$, or equivalently,\begin{align}\label{non-edges}s = O\left(\frac{n}{n_j(G)^{1/2}}\right). \end{align}
An upper bound on $n_{j+1}(G)$ is the sum over all $J\in \cJ$ of the number of $K_{j+1}$'s that $J$ is in; that is, $n_{j+1}(G)\leq \sum_{J\in \cJ} |N(J)|=n_j(G)s= O(n_j(G)^{1/2}n)$ by Equation~\ref{non-edges}.
\end{enumerate}
\end{proof}

\end{document}